\newtheorem{theorem}{Theorem}
\newtheorem{corollary}[theorem]{Corollary}
\newtheorem{definition}[theorem]{Definition}
\newtheorem{example}[theorem]{Example}
\newtheorem{lemma}[theorem]{Lemma}
\newtheorem{remark}[theorem]{Remark}
\newenvironment{proof}[1][Proof]{\noindent\textbf{#1.} }{\ \rule{0.5em}{0.5em}}
\begin{document}

\begin{center}
{\LARGE On the Factorization of Nonlinear Recurrences in Modules}

\medskip

H. SEDAGHAT \footnote{Department of Mathematics, Virginia Commonwealth
University, Richmond, VA \ 23284, USA;
\par
Email: hsedagha@vcu.edu}
\end{center}

\medskip

\begin{abstract}
For rings $R$ with identity, we define a class of nonlinear higher order
recurrences on unitary left $R$-modules that include linear recurrences as
special cases. We obtain conditions under which a recurrence of order $k+1$ in
this class is equivalent to a pair, known as a semiconjugate factorization,
that consists of a recurrence of order $k$ and a recurrence of order 1. We
show that such a factorization is possible whenever $R$ contains certain
sequences of units. Further, if the coefficients of the original recurrence in
$R$ are independent of the index then we show that the semiconjugate
factorization exists if two characteristic polynomials share a common root
that is a unit in $R$. We use this fact to show that an overlapping
factorization of these polynomials in an integral domain $R$ yields a
semiconjugate factorization of the corresponding recurrence in the module.
These results are applicable to systems of higher order, nonlinear difference
equations in direct products of rings. Such systems may be represented as
higher order equations in a module over the ring.

\end{abstract}

\medskip

\noindent\textbf{Key Words.} nonlinear recurrence, ring, module, polynomial, unit root,
semiconjugate factorization

\noindent\textit{Mathematics Subject Classifications.} 12H10, 16D10, 39A10

\section{Introduction}

Let $R$ be a ring with identity, $M$ a (unitary) left $R$-module and $k$ a
non-negative integer. Then for any given sequence of maps $f_{n}%
:M^{k+1}\rightarrow M$ the difference equation
\begin{equation}
x_{n+1}=f_{n}(x_{n},x_{n-1},\ldots,x_{n-k}) \label{de}%
\end{equation}
defines a recurrence of order $k+1$ in $M$; see Section \ref{horm} below for a
more precise definition. If each $f_{n}$ is linear then the recurrence is
linear, otherwise it is \textit{nonlinear}.

Methods of linear algebra are fruitfully used in the factorization of
\textit{linear} recurrences. These methods apply generally to linear systems
in commutative rings with identity. Such systems include standard unfoldings
of higher order linear recurrences in a ring $R$ to \textit{first-order}
recurrences in an $R$-module over the ring. These notions generalize the
familiar concept that a difference (or differential) equation of order $k$ can
be unfolded to a first-order equation in the (real) vector space of dimension
$k$. First-order linear recurrences in modules and algebras with coefficients
in rings are studied using standard methods; see, e.g., \cite{abu},
\cite{bir}, \cite{bro}, \cite{hen}, \cite{kur}, \cite{lak}, \cite{sedR},
\cite{sen}. In particular, the classical operator factorization of difference
(and differential) equations follows from these methods.

By contrast, no standard algebraic methods are known for the factorization of
\textit{nonlinear} difference equations, even in concrete cases involving
familiar fields such as the real or complex numbers. A method known as
\textit{semiconjugate factorization} (Section \ref{scfg} below) applies to
large classes of nonlinear recurrences as well as to linear ones. For the
linear cases, the factorization obtained is the familiar operator
factorization from the classical theory of linear difference equations. It
coincides with what is obtained from linear algebra as noted above.

More generally, semiconjugate factorization applies when a \textit{form
symmetry} of the given recurrence is identified according to an existing
classification scheme that is not limited to linear equations. The form
symmetry is used as an order-reducing substitution to break down, or factor,
the original equation into two equations of lower orders. This factorization
may be repeated as long as form symmetries are identified. For some background
on semiconjugate factorization we refer to \cite{fsor}.

In this paper we study a form of semiconjugate factorization that works well
for nonlinear recurrences of type%
\begin{equation}
x_{n+1}=\sum_{i=0}^{k}a_{i,n}x_{n-i}+g_{n}\left(  \sum_{i=0}^{k}b_{i,n}%
x_{n-i}\right)  \label{mn}%
\end{equation}
where the coefficients $a_{i,n},b_{i,n}$ are in a ring $R$ with identity. For
each $n$ the map $g_{n}:M\rightarrow M$ is defined on a (unitary) left
$R$-module $M$ that also contains the variable $x_{n}$ (see the next section
for more precise definitions). Since each ring $R$ is a left $R$-module over
itself, the ideas and results in this paper also apply to recurrences in
rings, where $x_{n}\in R$ for all $n.$

When $M=R$, (\ref{mn}) generalizes linear recurrences since the latter are
represented by (\ref{mn}) with $g_{n}(u)=0$ or $g_{n}(u)=c_{n}u$ for all $u\in
R$ and a given sequence of constants $c_{n}\in R$. \textit{Non-homogeneous}
linear recurrences are also special cases of (\ref{mn}) where $g_{n}(u)=d_{n}$
for every $u\in R$ and a given sequence of constants $d_{n}\in R$.

Special cases of Equation (\ref{mn}) on the set of real numbers
($M=R=\mathbb{R}$) have often appeared in the applied literature. The
classical economic models of the business cycle in mid-twentieth century are
among the early occurrences; see, e.g. \cite{Hic}, \cite{Puu}, \cite{Sam}.
Other special cases of (\ref{mn}) occurred later in mathematical studies of
biological models ranging from whale populations to neuron activity; see,
e.g., \cite{Clr}, \cite{FG}, \cite{Ham} and Section 2.5 in \cite{KL}. For
instance, the global dynamics of the solutions of the following equation are
discussed in \cite{Ham}
\[
x_{n+1}=\alpha x_{n}+a\tanh\left(  x_{n}-\sum_{i=1}^{k}b_{i}x_{n-i}\right)
\]
with constant real parameters $0\leq\alpha<1$, $a>0$ and $b_{i}\geq0$. The
studies of global dynamics for other special cases of (\ref{mn}) appear in
\cite{GLV} and \cite{KPS}; also see \cite{KL}, Section 6.9.

The dynamical properties of the solutions of the second-order case
\begin{equation}
x_{n+1}=cx_{n}+g(x_{n}-x_{n-1}) \label{sed}%
\end{equation}
have been studied in \cite{Elm}, \cite{KS}, \cite{S97}. Further, the
bifurcations of solutions of (\ref{sed}), including the Neimark-Sacker type
(discrete analog of the Hopf bifurcation) are studied in \cite{LZ}. A more
general form of (\ref{sed}), i.e.
\begin{equation}
x_{n+1}=ax_{n}+bx_{n}+g_{n}(x_{n}-cx_{n-1}) \label{sed1}%
\end{equation}
is studied in \cite{SKy}. In particular, \cite{dkmos} presents sufficient
conditions on parameters for the occurrence of limit cycles (attracting
periodic solutions) and chaos in recurrences of type%
\[
x_{n+1}=\frac{ax_{n}^{2}+bx_{n-1}^{2}+cx_{n}x_{n-1}+dx_{n}+ex_{n-1}+f}{\alpha
x_{n}+\beta x_{n-1}+\gamma}%
\]

This equation is obtained by using a single rational mapping $g$ in
(\ref{sed1}) of type
\[
g(r)=\frac{Ar+B}{Cr+D}.
\]

Like linear non-homogeneous equations mentioned above, equations such as
(\ref{mn}) are also meaningful in more general algebraic contexts such as
rings or modules. Further, In \cite{sedb} the autonomous version of (\ref{mn})
where the coefficients do not depend on $n$ and $g_{n}=g$ is fixed for all
$n$, is studied for normed algebras over real or complex numbers. Refinements
of this study in the case of complex coefficients are discussed in
\cite{sedC}. On the other hand, in \cite{sedR}\ we study the linear,
non-homogeneous special case of (\ref{mn}) with variable coefficients in
rings, along with some examples and applications. Although these studies are
largely focused on the dynamics of solutions, the method that is used to
reduce (\ref{mn}) to simpler equations is algebraic in nature. We focus on
this aspect and discuss the method in the more general setting of left modules
on rings with identity.

The layout of this paper is as follows: In Section \ref{horm} we discuss some
background issues pertaining to higher order nonlinear recurrences in modules
and in Section \ref{scfg} we present the basics of form symmetries and
semiconjugate factorization of higher order recurrences. Next, we define the
linear form symmetry in Section \ref{main} and obtain the corresponding
semiconjugate factorization of (\ref{mn}) over left $R$-modules where $R$ is a
ring with identity. In Section \ref{constc} we study the case of constant
coefficients and establish a connection between semiconjugate factorization of
recurrences and polynomial factorization in rings. Extending the above ideas
to vector spaces and modules allows the consideration of systems of nonlinear,
higher order difference equations in direct products of rings. We clarify this
issue in Section \ref{sys}. In Section \ref{repfac} we discuss repeated
semiconjugate factorizations and conditions for the complete factorization of
a higher order recurrence into a system of first-order recurrences. Section
\ref{red} starts a discussion of reducibility of (\ref{mn}) in a more general
context than polynomial factorization, in particular, the possibility of
obtaining a semiconjugate factorization where polynomial factorization is not
possible in the underlying ring. This section is open-ended and mainly a
starting point for possible future research.

\section{Higher order recurrences in modules\label{horm}}

In this section we discuss the basics of recurrences in modules. The basics of
rings and modules may be found in texts such as \cite{hun}\ and \cite{mlb}.
For a nonempty set $M$ let $S=M^{\mathbb{N}}$ be the set of all sequences in
$M$; here $\mathbb{N}$ is the set of all positive integers. If $R$ is a ring
with identity and $M$ is a left (unitary) $R$-module then $S$ is also a left
$R$-module under the usual operations of term-wise addition of sequences and
multiplying by scalars (elements of $R$).

For all $\{x\}=\{x_{1},x_{2},\ldots,x_{n},\ldots\}\in S$ and each
$n\in\mathbb{N}$ we define a projection map $\pi_{n}:S\rightarrow M$ as
$\pi_{n}\{x\}=x_{n}$ where $x_{n}$ is the $n$-th term of $\{x\}.$ Note that if
$\{x\},\{y\}\in S$ and $\pi_{n}\{x\}=\pi_{n}\{y\}$ for all $n\in\mathbb{N}$
then $\{x\}=\{y\}.$ Further, if $M$ is a left $R$-module then $\pi_{n}$ is a
left $R$-module epimorphism on $S$ for every $n.$

Next, the \textit{(forward) shift map} $E:S\rightarrow S$ is defined as
$\pi_{n}\circ E=\pi_{n+1}$ for all $n\in\mathbb{N}$; i.e.
\[
\pi_{n}\circ E\{x\}=\pi_{n+1}\{x\}\quad\text{for }\{x\}\in S\text{ and }%
n\in\mathbb{N}%
\]
or simply, $E\{x_{1},x_{2},\ldots\}=\{x_{2},x_{3},\ldots\}$. $E$ is
well-defined in this way since if $\{x^{\prime}\}=E\{x\}$ and also
$\{x^{\prime\prime}\}=E\{x\}$ then $\pi_{n}\{x^{\prime}\}=\pi_{n+1}%
\{x\}=\pi_{n}\{x^{\prime\prime}\}$ for all $n\in\mathbb{N}$ so $\{x^{\prime
}\}=\{x^{\prime\prime}\}.$ Further, $E$ is the unique operator on $S$ with
this property since if $E^{\prime}:S\rightarrow S$ also satisfies $\pi
_{n}\circ E^{\prime}=\pi_{n+1}$ for all $n\in\mathbb{N}$ then $E^{\prime}=E$
because for each $\{x\}\in S$ and all $n$
\[
\pi_{n}\circ E^{\prime}\{x\}=\pi_{n+1}\{x\}=\pi_{n}\circ E\{x\}.
\]

As $E$ raises each index of a sequence by 1, repeated applications of $E$
define additional shifts via $E^{2}=E\circ E,$ etc. It is readily verified
that $E$ is a left $R$-module epimorphism when $M$ is a module and the kernel
of $E$ is the set of all sequences $\{x,0,0,\ldots\}$ for $x\in M$. The set of
all fixed points of $E$ is the set of all constant sequences $\{x,x,\ldots\}$
in $S$; this set is a left $R$-submodule and a copy of $M$ in $S$.

To define higher order recurrences in left $R$-modules, let $k$ be a
non-negative integer and for each $n\in\mathbb{N}$ let $f_{n}:M^{k+1}%
\rightarrow M$ be a given map. Consider the set $\mathfrak{S}$, possibly
empty, of all sequences $\{x\}\in S$ that satisfy the equation%
\begin{equation}
\pi_{n}\circ E^{k+1}\{x\}=f_{n}\left(  \pi_{n}\circ E^{k}\{x\},\pi_{n}\circ
E^{k-1}\{x\},\ldots,\pi_{n}\circ E\{x\},\pi_{n}\{x\}\right)  \label{dee}%
\end{equation}
for every $n\in\mathbb{N}$. If $\mathfrak{S}$ is nonempty then we refer to
(\ref{dee}) as a \textit{recurrence of order} $k+1$ in $M$ and consider each
member of $\mathfrak{S}$ a \textit{solution} of (\ref{dee}). In the
first-order case where $k=0$, (\ref{dee}) reduces to the following
\begin{equation}
\pi_{n}\circ E=f_{n}\circ\pi_{n} \label{dee1}%
\end{equation}

A solution of (\ref{dee1}) is a sequence in $S$ for which equality in
(\ref{dee1}) holds for all $n\in\mathbb{N}$.

Note that the two sides of (\ref{dee}) are in $M$ rather than in $S$. To
simplify the notation, we write (\ref{dee}) in the abbreviated form%
\[
x_{n+k+1}=f_{n}(x_{n+k},x_{n+k-1},\ldots,x_{n}),\qquad n\geq0
\]
which is equivalent to (\ref{de}). The same set of initial values $x_{0}%
,x_{1},\ldots,x_{k}$ generate identical solutions in both cases, with $n\geq
k$ in (\ref{de}).

In the classical theory a \textquotedblleft scalar" recurrence of order $k+1$
in a field such as the real or complex numbers is often unfolded to a
first-order recurrence in a vector space of dimension $k+1$ in the following
way: functions $F_{n}:M^{k+1}\rightarrow M^{k+1}$ are defined as%
\[
F_{n}(u_{0},u_{1},\ldots,u_{k})=(f_{n}(u_{0},u_{1},\ldots,u_{k}),u_{0}%
,\ldots,u_{k-1})
\]
and used to recover (\ref{de}) form%
\begin{align}
(x_{0,n+1},\ldots,x_{k,n+1})  &  =F_{n}(x_{0,n},\ldots,x_{k,n})\nonumber\\
&  =(f_{n}(x_{0,n},\ldots,x_{k,n}),x_{0,n},\ldots,x_{k-1,n}) \label{veq}%
\end{align}
where $x_{j,n}=x_{n-j}$ are new variables. In this sense, (\ref{de}) in the
real or complex context is considered a special case of the \textquotedblleft
vector equation" or system of first-order equations,%
\[
(x_{0,n+1},\ldots,x_{k,n+1})=F_{n}(x_{0,n},\ldots,x_{k,n}).
\]

However, (\ref{dee}) is actually a higher-order generalization of this
first-order system which is the special case $k=0$ of (\ref{dee}) with $M$
being the vector space of dimension $k+1$. Thus, (\ref{de}) in the context of
modules extends the standard nonlinear theory to \textit{systems of higher
order equations} in direct products of rings; see comments in the Introduction
pertaining to the system (\ref{nso2a})-(\ref{nso2b}) and Example \ref{ds}
below. There is no standard theory comparable to that for first-order systems
in the existing literature for analyzing higher order nonlinear systems. The
analysis of such a system may be simplified in cases where semiconjugate
factorization reduces the order of the system to one.

\section{Semiconjugate factorization\label{scfg}}

In this section we list some general results from \cite{fsor} that are valid
for all recurrences, not only the linear ones. Let $\mathcal{G}$ be a
nontrivial group and assume in (\ref{de}) that $f_{n}:\mathcal{G}%
^{k+1}\rightarrow\mathcal{G}$. Let us unfold (\ref{de}) in the manner
described in the preceding section (which still works in this setting) to a
first-order recurrence%
\[
X_{n+1}=\mathfrak{F}_{n}(X_{n})
\]
on $\mathcal{G}^{k+1}$ where $\mathfrak{F}_{n}:\mathcal{G}^{k+1}%
\rightarrow\mathcal{G}^{k+1}$. We assume that $k\geq1$ because we shall define
a factorization of a recurrence into lower order recurrences and first-order
recurrences are already lowest in order. In analogy with polynomial
factorization, we consider first-order recurrences to be trivially irreducible.

Let $1\leq m\leq k$ and suppose that there is a sequence of maps $\Phi
_{n}:\mathcal{G}^{m}\rightarrow\mathcal{G}^{m}$ and a sequence of surjective
maps $H_{n}:\mathcal{G}^{k+1}\rightarrow\mathcal{G}^{m}$ that satisfy the
\textit{semiconjugate relation}%
\begin{equation}
H_{n+1}\circ\mathfrak{F}_{n}=\Phi_{n}\circ H_{n} \label{scr}%
\end{equation}
for a given pair of function sequences $\{\mathfrak{F}_{n}\}$ and $\{\Phi
_{n}\}.$ This may be illustarted as follows:%
\[%
\begin{array}
[c]{ccc}%
\mathcal{G}^{k+1} & \overset{F_{n}}{\longrightarrow} & F_{n}(\mathcal{G}%
^{k+1})\\
\downarrow_{H_{n}} &  & \downarrow_{H_{n+1}}\\
H_{n}(\mathcal{G}^{k+1})=\mathcal{G}^{m} & \overset{\Phi_{n}}{\longrightarrow}
& \Phi_{n}(H_{n}(\mathcal{G}^{k+1}))=H_{n+1}(F_{n}(\mathcal{G}^{k+1}))
\end{array}
\]

We say that $\mathfrak{F}_{n}$ is \textit{semiconjugate} to $\Phi_{n}$ for
each $n$ and that the sequence $\{H_{n}\}$ is a \textit{form symmetry} of
(\ref{de}). Since $m<k+1,$ the form symmetry $\{H_{n}\}$ is
\textit{order-reducing}. Note that if $H_{n}=H$ for all $n$ where $H$ is
injective ($m=k+1$) then (\ref{scr}) is a conjugacy relation between
$\{\mathfrak{F}_{n}\}$ and $\{\Phi_{n}\}$.

We state the next basic result from \cite{fsor} as a lemma here without proof.

\begin{lemma}
\label{scf}(Semiconjugate factorization) Let $(\mathcal{G},\ast)$ be a
nontrivial group\textit{ and let }$k\geq1$\textit{, }$1\leq m\leq k$\textit{
be integers. If }$h_{n}:\mathcal{G}^{k-m+1}\rightarrow\mathcal{G}$\textit{\ is
a sequence of functions }and the functions $H_{n}:\mathcal{G}^{k+1}%
\rightarrow\mathcal{G}^{m}$ are defined by\textit{\ }%
\[
H_{n}(u_{0},u_{1}\ldots,u_{k})=[u_{0}\ast h_{n}(u_{1},\ldots,u_{k-m+1}),\ldots
u_{m-1}\ast h_{n-m+1}(u_{m},\ldots,u_{k})]
\]
where $\ast$ denotes the group operation in $\mathcal{G}$ then the following
statements are true:\textit{\ }

(a) The function $H_{n}$ is surjective for every $n\geq0$.

(b) \textit{If }$\{H_{n}\}$\textit{\ is an order-reducing form symmetry then
the difference equation (\ref{de})\ is equivalent to the system of equations }%
\begin{align}
t_{n+1}  &  =\phi_{n}(t_{n},\ldots,t_{n-m+1}),\label{tdf}\\
x_{n+1}  &  =t_{n+1}\ast h_{n+1}(x_{n},\ldots,x_{n-k+m})^{-1} \label{tdcf}%
\end{align}
\textit{whose orders }$m$\textit{\ and }$k-m+1$\textit{\ respectively, add up
to the order of (\ref{de}). }

(c) The map $\Phi_{n}:\mathcal{G}^{m}\rightarrow\mathcal{G}^{m}$ is the
standard unfolding of Eq.(\ref{tdf}) for each $n\geq0$.
\end{lemma}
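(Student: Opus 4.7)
For part (a) my plan is to prove surjectivity of $H_n$ by an explicit backward construction. Given any target $(t_0,\ldots,t_{m-1})\in\mathcal{G}^m$, choose $u_m,\ldots,u_k\in\mathcal{G}$ arbitrarily; then, for $j=m-1,m-2,\ldots,0$ in descending order, set
\[
u_j := t_j \ast h_{n-j}(u_{j+1},\ldots,u_{k-m+1+j})^{-1}.
\]
At each step the arguments of $h_{n-j}$ lie among the $u_i$'s already constructed, and the required inverse exists because $\mathcal{G}$ is a group. A direct substitution then verifies $H_n(u_0,\ldots,u_k)=(t_0,\ldots,t_{m-1})$.

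For (b) and (c) the key device is the auxiliary sequence
\[
t_n := x_n \ast h_n(x_{n-1},\ldots,x_{n-k+m-1}).
\]
Substituting $u_i=x_{n-i}$ into the definition of $H_n$ shows that $H_n(X_n)=(t_n,t_{n-1},\ldots,t_{n-m+1})$, where $X_n=(x_n,x_{n-1},\ldots,x_{n-k})$, and an analogous computation gives $H_{n+1}(X_{n+1})=(t_{n+1},t_n,\ldots,t_{n-m+2})$. The semiconjugate relation $H_{n+1}\circ\mathfrak{F}_n=\Phi_n\circ H_n$, combined with the surjectivity from (a), forces
\[
\Phi_n(s_0,\ldots,s_{m-1})=\bigl(\phi_n(s_0,\ldots,s_{m-1}),\,s_0,\,\ldots,\,s_{m-2}\bigr)
\]
for some $\phi_n:\mathcal{G}^m\to\mathcal{G}$, i.e., $\Phi_n$ is the standard unfolding of $\phi_n$, establishing (c). Reading the first coordinate yields (tdf), while solving the identity $t_{n+1}=x_{n+1}\ast h_{n+1}(x_n,\ldots,x_{n-k+m})$ for $x_{n+1}$ via group inversion gives (tdcf). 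This settles the forward implication of the equivalence in (b).

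For the converse, suppose $\{x_n\}$ and $\{t_n\}$ satisfy (tdf)-(tdcf) with initial $t$-values computed from the initial $x$-values by the defining formula above. Then (tdcf) rearranges to $t_{n+1}=x_{n+1}\ast h_{n+1}(x_n,\ldots,x_{n-k+m})$, which inductively propagates the identity $t_n=x_n\ast h_n(\cdots)$ to all $n$, so $H_n(X_n)=(t_n,\ldots,t_{n-m+1})$. The semiconjugate relation then gives $H_{n+1}(X_{n+1})=H_{n+1}(\mathfrak{F}_n(X_n))$; the last $m-1$ coordinates of both sides coincide by the shift structure of $\mathfrak{F}_n$, and cancelling the common factor $h_{n+1}(x_n,\ldots,x_{n-k+m})$ in the first coordinates (using invertibility in $\mathcal{G}$) yields $x_{n+1}=f_n(x_n,\ldots,x_{n-k})$, which is (de). The main obstacle I anticipate is purely notational: the careful tracking of the shifting subscripts and the argument ranges such as $u_{j+1},\ldots,u_{k-m+1+j}$ across the three parts. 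The only essential algebraic input beyond this bookkeeping is the availability of inverses in $\mathcal{G}$, which drives both the preimage construction in (a) and the cancellation step in the converse of (b).
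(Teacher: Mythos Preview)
The paper does not actually prove this lemma; it is quoted without proof from \cite{fsor}. Your argument is correct and is the natural one: the explicit backward construction in (a) using group inverses, the identification $H_n(X_n)=(t_n,\ldots,t_{n-m+1})$ via the substitution $t_n=x_n\ast h_n(x_{n-1},\ldots,x_{n-k+m-1})$, and the use of surjectivity plus the shift structure of $\mathfrak{F}_n$ to pin down $\Phi_n$ as a standard unfolding all go through as you describe. The converse step in (b)---applying the semiconjugate relation to an arbitrary $X_n$ (not a priori a solution) and then cancelling $h_{n+1}(x_n,\ldots,x_{n-k+m})$ in the first coordinate---is the right way to recover (\ref{de}), and your remark that the only algebraic ingredient is invertibility in $\mathcal{G}$ is exactly the point.
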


\begin{definition}
The pair of equations (\ref{tdf}) and (\ref{tdcf}) constitute the
\textbf{semiconjugate factorization}, or \textbf{sc-factorization} of
(\textit{\ref{de}}). This pair of equations is a triangular system since
(\ref{tdf}) is independent of (\ref{tdcf}). We call (\ref{tdf}) the
\textbf{factor }equation of (\textit{\ref{de}}) and (\ref{tdcf}) its
\textbf{cofactor }equation\textit{. }
\end{definition}

Note that (\ref{tdf}) has order $m$ and (\ref{tdcf}) has order $k-m+1.$
Consider the following special case of $H_{n}$ in Lemma \ref{scf} with $m=k$%
\begin{equation}
H_{n}(u_{0},\ldots,u_{k})=[u_{0}\ast h_{n}(u_{1}),u_{1}\ast h_{n-1}%
(u_{2}),\ldots,u_{k-1}\ast h_{n-k+1}(u_{k})] \label{fsk1}%
\end{equation}
where $h_{n}:\mathcal{G}\rightarrow\mathcal{G}$ is a given sequence of maps.
The semiconjugate factorization of (\ref{de}) in this case is%
\begin{align}
t_{n+1}  &  =\phi_{n}(t_{n},\ldots,t_{n-k+1}),\label{tdf1}\\
x_{n+1}  &  =t_{n+1}\ast h_{n+1}(x_{n})^{-1} \label{tdcf1}%
\end{align}
in which the factor equation has order $k$ and the cofactor equation has order 1.

The next result gives a necessary and sufficient condition for the existence
of a form symmetry of type (\ref{fsk1}); see \cite{fsor} for the proof.

\begin{lemma}
\label{imc}(Invertible-map criterion) Let $(\mathcal{G},\ast)$ be a nontrivial
group\textit{ and }assume that $h_{n}:\mathcal{G}\rightarrow\mathcal{G}$ is a
sequence of bijections. For arbitrary elements $u_{0},v_{1},\ldots,v_{k}%
\in\mathcal{G}$ and every $n\geq0$ define $\zeta_{0,n}(u_{0})\equiv u_{0}$ and
for $j=1,\ldots,k$ define%
\begin{equation}
\zeta_{j,n}(u_{0},v_{1},\ldots,v_{j})=h_{n-j+1}^{-1}(\zeta_{j-1,n}(u_{0}%
,v_{1},\ldots,v_{j-1})^{-1}\ast v_{j}). \label{hzetajn}%
\end{equation}

Then (\textit{\ref{de}}) has the form symmetry (\ref{fsk1}) if and only if the
quantity
\begin{equation}
f_{n}(\zeta_{0,n},\zeta_{1,n}(u_{0},v_{1}),\ldots,\zeta_{k,n}(u_{0}%
,v_{1},\ldots,v_{k}))\ast h_{n+1}(u_{0}) \label{tdhinvcrit}%
\end{equation}
is independent of $u_{0}$ for every $n\geq0$. In this case (\textit{\ref{de}})
has a semiconjugate factorization into (\ref{tdf1}) and (\ref{tdcf1}) where
the factor functions in (\ref{tdf1}) are given by%
\begin{equation}
\phi_{n}(v_{1},\ldots,v_{k})=f_{n}(\zeta_{0,n},\zeta_{1,n}(u_{0},v_{1}%
),\ldots,\zeta_{k,n}(u_{0},v_{1},\ldots,v_{k}))\ast h_{n+1}(u_{0}).
\label{tdinvf}%
\end{equation}

\end{lemma}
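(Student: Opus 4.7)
The plan is to verify the semiconjugate relation $H_{n+1}\circ\mathfrak{F}_n=\Phi_n\circ H_n$ componentwise for the specific $H_n$ in (\ref{fsk1}), where $\mathfrak{F}_n(u_0,\ldots,u_k)=(f_n(u_0,\ldots,u_k),u_0,\ldots,u_{k-1})$ is the standard unfolding of (\ref{de}). Substituting the components of $\mathfrak{F}_n$ into the defining formula of $H_{n+1}$ produces a $k$-tuple whose first entry is $f_n(u_0,\ldots,u_k)\ast h_{n+1}(u_0)$ and whose remaining $k-1$ entries are $u_0\ast h_n(u_1),\,u_1\ast h_{n-1}(u_2),\,\ldots,\,u_{k-2}\ast h_{n-k+2}(u_{k-1})$. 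The key observation is that these last $k-1$ entries are exactly the first $k-1$ entries of $H_n(u_0,\ldots,u_k)$.

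By Lemma~\ref{scf}(a), $H_n$ is surjective, so $\Phi_n$ is uniquely determined on $\mathcal{G}^k$ by the semiconjugate relation; the coordinate comparison just made forces $\Phi_n(t_1,\ldots,t_k)=(\phi_n(t_1,\ldots,t_k),t_1,\ldots,t_{k-1})$ for some $\phi_n:\mathcal{G}^k\to\mathcal{G}$, in agreement with Lemma~\ref{scf}(c). The semiconjugate relation therefore collapses to the single scalar equation
\[
f_n(u_0,\ldots,u_k)\ast h_{n+1}(u_0)=\phi_n\bigl(u_0\ast h_n(u_1),\,u_1\ast h_{n-1}(u_2),\,\ldots,\,u_{k-1}\ast h_{n-k+1}(u_k)\bigr),
\]
required to hold for every choice of $u_0,u_1,\ldots,u_k\in\mathcal{G}$.

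Next I would change variables by setting $v_j=u_{j-1}\ast h_{n-j+1}(u_j)$ for $j=1,\ldots,k$. Because every $h_{n-j+1}$ is a bijection, these equations can be solved recursively in the form $u_j=h_{n-j+1}^{-1}\bigl(u_{j-1}^{-1}\ast v_j\bigr)$, and this is precisely the recursion (\ref{hzetajn}) defining $\zeta_{j,n}(u_0,v_1,\ldots,v_j)$; in particular the assignment $(u_0,v_1,\ldots,v_k)\mapsto(u_0,u_1,\ldots,u_k)$ is a bijection of $\mathcal{G}^{k+1}$. In the new coordinates the scalar equation becomes
\[
f_n\bigl(\zeta_{0,n},\zeta_{1,n}(u_0,v_1),\ldots,\zeta_{k,n}(u_0,v_1,\ldots,v_k)\bigr)\ast h_{n+1}(u_0)=\phi_n(v_1,\ldots,v_k).
\]
Since the right-hand side does not involve $u_0$, a function $\phi_n$ producing the form symmetry exists if and only if the left-hand side is independent of $u_0$, which is exactly the criterion (\ref{tdhinvcrit}); when it holds, equating the two sides yields the explicit formula (\ref{tdinvf}) for $\phi_n$, and this simultaneously handles both directions of the equivalence.

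The main obstacle I foresee is purely bookkeeping: keeping the shifted indices on $h_{n-j+1}$ aligned with those on $\zeta_{j-1,n}$ through the change of variables, and verifying that the recursive solution $u_j=h_{n-j+1}^{-1}(u_{j-1}^{-1}\ast v_j)$ matches (\ref{hzetajn}) term by term. The use of Lemma~\ref{scf}(a),(c) to force $\Phi_n$ into standard-unfolding form is the nontrivial conceptual step that lets a single scalar identity encode the whole semiconjugate relation; everything else is algebra.
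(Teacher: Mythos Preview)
The paper does not actually prove Lemma~\ref{imc}; it states the result and refers the reader to \cite{fsor} for the proof. So there is no ``paper's own proof'' to compare against here.

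That said, your proposal is correct and is essentially the standard argument one would expect. Computing $H_{n+1}\circ\mathfrak{F}_n$ componentwise and observing that its last $k-1$ entries coincide with the first $k-1$ entries of $H_n$ is exactly the right move; it forces any $\Phi_n$ satisfying the semiconjugate relation to be the standard unfolding of some scalar map $\phi_n$. Your key insight---that the fibers of the surjection $H_n$ are parametrized precisely by $u_0\in\mathcal{G}$ once the $v_j$ are fixed, via the recursion $u_j=h_{n-j+1}^{-1}(u_{j-1}^{-1}\ast v_j)=\zeta_{j,n}$---correctly identifies ``independence of $u_0$'' as the well-definedness condition for $\phi_n$, and hence as both necessary and sufficient for the form symmetry. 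The index bookkeeping you flag as a concern checks out: with $v_j=u_{j-1}\ast h_{n-j+1}(u_j)$ one gets $u_j=h_{n-j+1}^{-1}(u_{j-1}^{-1}\ast v_j)$, matching (\ref{hzetajn}) exactly. One minor point: rather than invoking Lemma~\ref{scf}(c) to justify the unfolding form of $\Phi_n$, it is cleaner to derive it directly from your componentwise computation (as you in fact do), since Lemma~\ref{scf}(b),(c) as stated already presuppose that $\{H_n\}$ is a form symmetry.
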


For a left $R$-module $M$, the group $\mathcal{G}$ in the preceding result is
the underlying abelian group $(M,+)$ so that $\ast$ denotes addition $+$ and
group inversion is the ordinary negative. Thus (\ref{hzetajn}),
(\ref{tdhinvcrit}) and (\ref{tdcf1}) read, respectively, as follows%
\begin{gather}
\zeta_{j,n}(u_{0},v_{1},\ldots,v_{j})=h_{n-j+1}^{-1}(v_{j}-\zeta_{j-1,n}%
(u_{0},v_{1},\ldots,v_{j-1})),\label{top}\\
f_{n}(\zeta_{0,n},\zeta_{1,n}(u_{0},v_{1}),\ldots,\zeta_{k,n}(u_{0}%
,v_{1},\ldots,v_{k}))+h_{n+1}(u_{0}),\label{mid}\\
x_{n+1}=t_{n+1}-h_{n+1}(x_{n}). \label{bot}%
\end{gather}

\section{sc-factorization relative to the linear form symmetry \label{main}}

To obtain a sc-factorization for the recurrence (\ref{mn}) we identify a
suitable form symmetry $H_{n}$. Unless otherwise noted, we will assume in the
rest of the paper that $R$ is a ring with identity and $M$ a left (unitary)
$R$-module.

\begin{definition}
\label{fsl}Let $\{\alpha_{n}\}$ be a sequence in $R$ such that $\alpha
_{n}\not =0$ for all $n.$ A\textit{ }\textbf{linear form symmetry} on $M$ is
defined as the special case of (\ref{fsk1}) with $h_{n}(u)=-\alpha_{n-1}u$ for
all $u\in M$ ; i.e.,
\begin{equation}
\lbrack u_{0}-\alpha_{n-1}u_{1},u_{1}-\alpha_{n-2}u_{2},\ldots,u_{k-1}%
-\alpha_{n-k}u_{k}] \label{lfs}%
\end{equation}

\end{definition}

If $\alpha$ is \textit{not} a zero divisor in $R$ then the mapping
$h(u)=-\alpha u$ is clearly injective. In general, $h$ is not surjective even
if $R$ contains no zero divisors (e.g. $\alpha\in\mathbb{Z}$, $\alpha\not =%
\pm1$). On the other hand, if each $\alpha$ is a unit in $R$ then each $h$ is
a bijection with inverse $h^{-1}(u)=-\alpha^{-1}u.$

With the linear form symmetry, Equations (\ref{top})-(\ref{bot}) read as
follows:%
\begin{gather}
\zeta_{j,n}(u_{0},v_{1},\ldots,,v_{j})=-\alpha_{n-j}^{-1}(v_{j}-\zeta
_{j-1,n}(u_{0},v_{1},\ldots,v_{j-1})),\label{t1}\\
f_{n}(\zeta_{0,n},\zeta_{1,n}(u_{0},v_{1}),\ldots,\zeta_{k,n}(u_{0}%
,v_{1},\ldots,v_{k}))-\alpha_{n}u_{0},\label{t2}\\
x_{n+1}=t_{n+1}+\alpha_{n}x_{n}. \label{t3}%
\end{gather}

The following is a straightforward consequence of Lemma \ref{imc} and proved
by simple induction.

\begin{lemma}
\label{tdlfs} Equation (\ref{de}) has the linear form symmetry (\ref{lfs}) on
the $R$-module $M$ if and only if there is a sequence $\{\alpha_{n}\}$ of
units in $R$ such that the quantity%
\begin{equation}
f_{n}(u_{0},\zeta_{1,n}(u_{0},v_{1}),\ldots,\zeta_{k,n}(u_{0},v_{1}%
,\ldots,v_{k}))-\alpha_{n}u_{0} \label{tdlfscr}%
\end{equation}
is independent of $u_{0}$ for all $n$, where for $j=1,\ldots,k$,
\begin{equation}
\zeta_{j,n}(u_{0},v_{1},\ldots,v_{j})=\left(  \prod_{i=1}^{j}\alpha
_{n-i}\right)  ^{-1}u_{0}-\sum_{i=1}^{j}\left(
%TCIMACRO{\dprod _{l=i}^{j}}%
%BeginExpansion
{\displaystyle\prod_{l=i}^{j}}
%EndExpansion
\alpha_{n-l}\right)  ^{-1}v_{i}. \label{zen}%
\end{equation}

\end{lemma}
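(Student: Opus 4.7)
The plan is to recognize that Lemma \ref{tdlfs} is just the specialization of the invertible-map criterion (Lemma \ref{imc}) to the additive group $(M,+)$ with the particular choice $h_{n}(u)=-\alpha_{n-1}u$. First I would observe that, on a left $R$-module, the map $u\mapsto -\alpha_{n-1}u$ is a bijection precisely when $\alpha_{n-1}$ is a unit, with inverse $u\mapsto -\alpha_{n-1}^{-1}u$; this justifies the hypothesis "units in $R$" in the statement and makes Lemma \ref{imc} applicable. Substituting this $h_{n}$ into the general recursion (\ref{hzetajn}) collapses to exactly (\ref{t1}), and substituting it into (\ref{tdhinvcrit}) collapses to exactly (\ref{tdlfscr}). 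So once these substitutions are carried out, what Lemma \ref{imc} gives is precisely the desired equivalence, modulo identifying the closed-form expression (\ref{zen}) with the recursively defined $\zeta_{j,n}$ in (\ref{t1}).

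The second step is therefore to verify (\ref{zen}) by induction on $j$. The base case $j=0$ gives $\zeta_{0,n}(u_{0})=u_{0}$, which matches (\ref{zen}) if the empty sum is $0$ and the empty product is $1$; the case $j=1$ is a direct computation from (\ref{t1}). For the inductive step, assume the closed form for $\zeta_{j-1,n}$ and apply (\ref{t1}):
\[
\zeta_{j,n}=-\alpha_{n-j}^{-1}v_{j}+\alpha_{n-j}^{-1}\zeta_{j-1,n}.
\]
Distributing $\alpha_{n-j}^{-1}$ through the closed form for $\zeta_{j-1,n}$ and then invoking the identity $\alpha_{n-j}^{-1}(\alpha_{n-i}\alpha_{n-i-1}\cdots\alpha_{n-(j-1)})^{-1}=(\alpha_{n-i}\cdots\alpha_{n-j})^{-1}$ converts each term to the required form.

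The main (and only) potential obstacle is the non-commutativity of $R$: I must be careful that the order in which the $\alpha$'s appear in the products matches the order in which they were introduced by the recursion. This works out because inversion reverses products, i.e.\ $(ab)^{-1}=b^{-1}a^{-1}$, so left-multiplying $(\alpha_{n-1}\cdots\alpha_{n-j+1})^{-1}=\alpha_{n-j+1}^{-1}\cdots\alpha_{n-1}^{-1}$ by $\alpha_{n-j}^{-1}$ yields $\alpha_{n-j}^{-1}\alpha_{n-j+1}^{-1}\cdots\alpha_{n-1}^{-1}=(\alpha_{n-1}\cdots\alpha_{n-j})^{-1}$, which is exactly the coefficient required by (\ref{zen}). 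The same bookkeeping handles the coefficient of each $v_{i}$ for $1\leq i\leq j-1$, and the new $-\alpha_{n-j}^{-1}v_{j}$ term supplies the missing $i=j$ summand (since $\prod_{l=j}^{j}\alpha_{n-l}=\alpha_{n-j}$).

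With (\ref{zen}) established, the equivalence in Lemma \ref{imc} specializes term-for-term to the statement of Lemma \ref{tdlfs}: the form symmetry (\ref{lfs}) exists on $M$ if and only if the expression (\ref{tdlfscr})—which is (\ref{tdhinvcrit}) after making the substitutions $\ast\mapsto +$ and $h_{n+1}(u_{0})=-\alpha_{n}u_{0}$—is independent of $u_{0}$ for every $n$. No additional analysis is needed since the heavy lifting is already done by Lemma \ref{imc}; Lemma \ref{tdlfs} is essentially a transcription of that criterion into the additive, linear, module-theoretic setting.
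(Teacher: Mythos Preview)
Your proposal is correct and follows exactly the approach the paper indicates: the paper states that Lemma~\ref{tdlfs} is ``a straightforward consequence of Lemma~\ref{imc} and proved by simple induction,'' and you carry out precisely this---specializing the invertible-map criterion to $h_{n}(u)=-\alpha_{n-1}u$ on $(M,+)$ and then verifying the closed form (\ref{zen}) by induction on $j$, with appropriate care for the order of factors in the non-commutative case. One very minor quibble: your claim that $u\mapsto -\alpha_{n-1}u$ is a bijection \emph{precisely} when $\alpha_{n-1}$ is a unit is not literally true for an arbitrary module (e.g.\ the zero module), but the paper itself only uses the ``if'' direction, and the lemma is stated under the standing hypothesis that the $\alpha_{n}$ are units, so this does not affect the argument.
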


The following is one of the main results of this paper.

\begin{theorem}
\label{scmn}Equation (\ref{mn}) has the linear form symmetry if and only if
there is a sequence of units in $R$ that satisfies both of the following
equations:%
\begin{align}
a_{0,n}+a_{1,n}\alpha_{n-1}^{-1}+a_{2,n}\left(  \alpha_{n-1}\alpha
_{n-2}\right)  ^{-1}+\cdots+a_{k,n}\left(  \alpha_{n-1}\alpha_{n-2}%
\cdots\alpha_{n-k}\right)  ^{-1}  &  =\alpha_{n}\label{esa}\\
b_{0,n}+b_{1,n}\alpha_{n-1}^{-1}+b_{2,n}\left(  \alpha_{n-1}\alpha
_{n-2}\right)  ^{-1}+\cdots+b_{k,n}\left(  \alpha_{n-1}\alpha_{n-2}%
\cdots\alpha_{n-k}\right)  ^{-1}  &  =0. \label{esb}%
\end{align}

If such a unitary sequence say, $\{\rho_{n}\}$ exists then the
sc-factorization of (\ref{mn}) in $M$ is%
\begin{align}
t_{n+1}  &  =-\sum_{j=1}^{k}\sum_{i=1}^{j}a_{j,n}\left(  \gamma_{ij}\right)
^{-1}t_{n-i+1}+g_{n}\left(  -\sum_{j=1}^{k}\sum_{i=1}^{j}b_{j,n}\left(
\gamma_{ij}\right)  ^{-1}t_{n-i+1}\right) \label{mnf}\\
x_{n+1}  &  =\rho_{n}x_{n}+t_{n+1} \label{mncf}%
\end{align}
where $\gamma_{ij}=%
%TCIMACRO{\dprod _{l=i}^{j}}%
%BeginExpansion
{\displaystyle\prod_{l=i}^{j}}
%EndExpansion
\rho_{n-l}$ and its inversion occurs in the group of units of $R$.
\end{theorem}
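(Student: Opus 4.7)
The plan is to apply Lemma~\ref{tdlfs}, which reduces the existence of the linear form symmetry for (\ref{mn}) to checking when the quantity (\ref{tdlfscr}) is independent of $u_0$, where $\zeta_{j,n}$ is given in closed form by (\ref{zen}). The first step is to substitute the explicit right-hand side of (\ref{mn}) as $f_n$ and then insert
\[
\zeta_{j,n}(u_0,v_1,\ldots,v_j)=\gamma_{1,j}^{-1}u_0-\sum_{i=1}^{j}\gamma_{ij}^{-1}v_i,\qquad \gamma_{ij}=\prod_{l=i}^{j}\alpha_{n-l},
\]
with the convention $\zeta_{0,n}=u_0$ and $\gamma_{1,0}=1$. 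This splits (\ref{tdlfscr}) into an $R$-linear combination produced by the $a_{j,n}$ terms and a single nonlinear contribution in which $u_0$ enters only through the argument of $g_n$.

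Collecting the $u_0$-dependence, (\ref{tdlfscr}) takes the form
\[
\Bigl(\sum_{j=0}^{k}a_{j,n}\gamma_{1,j}^{-1}-\alpha_n\Bigr)u_0 - \sum_{j=1}^{k}\sum_{i=1}^{j}a_{j,n}\gamma_{ij}^{-1}v_i + g_n\!\Bigl(\Bigl(\sum_{j=0}^{k}b_{j,n}\gamma_{1,j}^{-1}\Bigr)u_0 - \sum_{j=1}^{k}\sum_{i=1}^{j}b_{j,n}\gamma_{ij}^{-1}v_i\Bigr).
\]
For this to be constant in $u_0$, the explicit linear coefficient of $u_0$ must vanish, yielding precisely (\ref{esa}); and the argument of $g_n$ must itself be constant in $u_0$, yielding precisely (\ref{esb}). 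Conversely, if both equations hold then the whole expression is manifestly $u_0$-free, so Lemma~\ref{tdlfs} produces the biconditional with $\rho_n = \alpha_n$.

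Once the conditions are verified, the factor function of the sc-factorization is read off from (\ref{tdinvf}), which in the additive setting of (\ref{t2}) is the $u_0$-independent expression above. Evaluating at $u_0=0$ is legitimate and kills the $\gamma_{1,j}^{-1}u_0$ piece of every $\zeta_{j,n}$, leaving exactly the right-hand side of (\ref{mnf}) after relabeling $v_i\mapsto t_{n-i+1}$; the cofactor (\ref{mncf}) is then (\ref{t3}) with $\alpha_n=\rho_n$. The only delicate step is the necessity of (\ref{esa}) and (\ref{esb}) separately, since one must rule out a coincidental cancellation between the $a$-sum and the $g_n$-term. However, the $a$-contribution depends $R$-linearly on $u_0$ while the $g_n$-contribution depends on $u_0$ only through the single scalar $\sum_j b_{j,n}\gamma_{1,j}^{-1}$ multiplying it inside the argument of $g_n$; these two $u_0$-dependencies live in structurally independent summands and cannot mutually cancel for a generic $g_n$, forcing each coefficient to vanish on its own.
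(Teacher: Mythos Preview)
Your approach is essentially the same as the paper's: both substitute the explicit form of $\zeta_{j,n}$ from (\ref{zen}) into (\ref{tdlfscr}), separate the $u_0$-coefficient in the linear part and in the argument of $g_n$, and read off (\ref{esa}), (\ref{esb}), and the factor map $\phi_n$. Your last paragraph on the necessity direction goes beyond what the paper's proof offers (the paper simply sets each $u_0$-coefficient to zero without further comment); your instinct that separate vanishing needs justification is sound, but the appeal to a ``generic $g_n$'' is not a proof for a \emph{fixed} sequence $g_n$, so that final sentence should either be tightened to an explicit hypothesis on $g_n$ or dropped to match the paper's level of rigor.
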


\begin{proof}
The quantity (\ref{tdlfscr}) for (\ref{mn}) is
\begin{equation}
a_{0,n}u_{0}+\sum_{j=1}^{k}a_{j,n}\zeta_{j,n}+g_{n}\left(  b_{0,n}u_{0}%
+\sum_{j=1}^{k}b_{j,n}\zeta_{j,n}\right)  -\rho_{n}u_{0} \label{aeq}%
\end{equation}
\noindent where $\zeta_{j,n}$ abbreviates $\zeta_{j,n}(u_{0},v_{1}%
,\ldots,v_{j})$. Using (\ref{zen}) we obtain%
\begin{align*}
a_{0,n}u_{0}+\sum_{j=1}^{k}a_{j,n}\zeta_{j,n}  &  =(a_{0,n}-\rho_{n}%
)u_{0}+\sum_{j=1}^{k}a_{j,n}\left[  \left(  \prod_{i=1}^{j}\rho_{n-i}\right)
^{-1}u_{0}-\sum_{i=1}^{j}\left(
%TCIMACRO{\dprod _{l=i}^{j}}%
%BeginExpansion
{\displaystyle\prod_{l=i}^{j}}
%EndExpansion
\rho_{n-l}\right)  ^{-1}v_{i}\right] \\
&  =\left[  a_{0,n}-\rho_{n}+\sum_{j=1}^{k}a_{j,n}\left(  \prod_{i=1}^{j}%
\rho_{n-i}\right)  ^{-1}\right]  u_{0}-\sum_{j=1}^{k}\sum_{i=1}^{j}%
a_{j,n}\left(
%TCIMACRO{\dprod _{l=i}^{j}}%
%BeginExpansion
{\displaystyle\prod_{l=i}^{j}}
%EndExpansion
\rho_{n-l}\right)  ^{-1}v_{i}%
\end{align*}

Now, setting the coefficient of $u_{0}$ equal to zero shows that $\{\rho
_{n}\}$ satisfies (\ref{esa}). Similarly, for the quantity inside $g_{n}$ we
obtain%
\begin{align*}
b_{0,n}u_{0}+\sum_{j=1}^{k}b_{j,n}\zeta_{j,n}  &  =b_{0,n}u_{0}+\sum_{j=1}%
^{k}b_{j,n}\left[  \left(  \prod_{i=1}^{j}\rho_{n-i}\right)  ^{-1}u_{0}%
-\sum_{i=1}^{j}\left(
%TCIMACRO{\dprod _{l=i}^{j}}%
%BeginExpansion
{\displaystyle\prod_{l=i}^{j}}
%EndExpansion
\rho_{n-l}\right)  ^{-1}v_{i}\right] \\
&  =\left[  b_{0,n}+\sum_{j=1}^{k}b_{j,n}\left(  \prod_{i=1}^{j}\rho
_{n-i}\right)  ^{-1}\right]  u_{0}-\sum_{j=1}^{k}b_{j,n}\sum_{i=1}^{j}\left(
%TCIMACRO{\dprod _{l=i}^{j}}%
%BeginExpansion
{\displaystyle\prod_{l=i}^{j}}
%EndExpansion
\rho_{n-l}\right)  ^{-1}v_{i}%
\end{align*}

Again, setting the coefficient of $u_{0}$ shows that $\{\rho_{n}\}$ satisfies
(\ref{esb}). Thus, by Lemma \ref{tdfls}, (\ref{mn}) has the linear form
symmetry if and only if the above $\{\rho_{n}\}$ satisfies (\ref{esa}) and
(\ref{esb}).

What is left in (\ref{aeq}) after all the terms with $u_{0}$ are removed is%
\[
\phi_{n}(v_{1},\ldots,v_{k})=-\sum_{j=1}^{k}\sum_{i=1}^{j}a_{j,n}\gamma
_{ij}^{-1}v_{i}+g_{n}\left(  -\sum_{j=1}^{k}\sum_{i=1}^{j}b_{j,n}\gamma
_{ij}^{-1}v_{i}\right)
\]
with $\gamma_{ij}=%
%TCIMACRO{\dprod _{l=i}^{j}}%
%BeginExpansion
{\displaystyle\prod_{l=i}^{j}}
%EndExpansion
\rho_{n-l}$ which yields the factor equation (\ref{mnf}). The cofactor
equation (\ref{mncf}) is clear from Lemma \ref{scf}.
\end{proof}

\medskip

\begin{example}
\label{Q}To illustrate the preceding result consider the third-order
recurrence%
\begin{equation}
x_{n+1}=a_{n}x_{n}+g_{n}(x_{n}+x_{n-2}) \label{qsc}%
\end{equation}
in a left vector space $V$ over the (real) quaternions. Here $a_{n}$ is a
quaternion for each $n$ and $g_{n}:V\rightarrow V$ is an arbitrary mapping. To
examine possible sc-factorizations of (\ref{qsc}) we consider (\ref{esa}) and
(\ref{esb}) for this recurrence, where $a_{0,n}=a_{n}$, $b_{0,n}=b_{2,n}=1$
and $a_{1,n}=a_{2,n}=b_{1,n}=0$ for every $n$. We obtain%
\[
a_{n}=\alpha_{n},\quad1+\left(  \alpha_{n-1}\alpha_{n-2}\right)  ^{-1}=0
\]

This implies that the quaternions $a_{n}$ must satisfy%
\begin{equation}
a_{n}a_{n-1}+1=0 \label{p2}%
\end{equation}
for all $n\geq2$ but may otherwise be arbitrary. There are an infinite number
of possibilities, including infinitely many constants $a_{n}=a$ since the
polynomial equation $a^{2}+1=0$ has infinitely many quaternion solutions. In
particular, $a$ may be any one of the base quaternions $\pm i$, $\pm j$, $\pm
k$. There are also infinitely many non-constant solutions of type
$\{a_{n}\}=\{a,-1/a,a,-1/a,\ldots\}$ for each quaternion $a\not =0$. For all
such $a_{n}$, Theorem \ref{scmn} yields the sc-factorization of (\ref{qsc}) as%
\begin{align}
t_{n+1}  &  =g_{n}(-a_{n-1}^{-1}t_{n}-(a_{n-1}a_{n-2})^{-1}t_{n-1}%
)=g_{n}(a_{n}t_{n}+t_{n-1}),\label{sc2}\\
x_{n+1}  &  =a_{n}x_{n}+t_{n+1}\nonumber
\end{align}

\end{example}

The following corollary is an immediate consequence of Theorem \ref{scmn} for
vector spaces.

\begin{corollary}
\label{n}Let $V$ be a vector space over a field $\mathcal{F}$. If
$\{\alpha_{n}\}$ is a sequence of nonzero elements in $\mathcal{F}$\ that
satisfies (\ref{esa}) and (\ref{esb}) then (\ref{mn}) has the linear form
symmetry and the corresponding sc-factorization in $V$ into the system
(\ref{mnf})-(\ref{mncf}).
\end{corollary}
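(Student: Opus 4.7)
The plan is to observe that Corollary \ref{n} is essentially a specialization of Theorem \ref{scmn} to the setting where the underlying ring is a field. Since the corollary is advertised as an immediate consequence, the proof should be just a few lines of verification that the hypotheses of the theorem are satisfied.

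First I would note that a field $\mathcal{F}$ is in particular a ring with identity, and that every vector space $V$ over $\mathcal{F}$ is, by definition, a (unitary) left $\mathcal{F}$-module. Thus the framework of Section \ref{main} applies verbatim with $R=\mathcal{F}$ and $M=V$. The second step is the key identification: in a field, the set of units is precisely $\mathcal{F}\setminus\{0\}$, so the hypothesis that $\{\alpha_n\}$ is a sequence of nonzero elements of $\mathcal{F}$ is exactly the hypothesis that $\{\alpha_n\}$ is a sequence of units in the ring $\mathcal{F}$.

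With these identifications, the assumed equations (\ref{esa}) and (\ref{esb}) are satisfied by a sequence of units, which is precisely the criterion provided by Theorem \ref{scmn} for Equation (\ref{mn}) to admit the linear form symmetry. Invoking Theorem \ref{scmn} directly then yields the sc-factorization of (\ref{mn}) into the system (\ref{mnf})--(\ref{mncf}), where the inverses $\gamma_{ij}^{-1}$ exist in the group of units of $\mathcal{F}$, i.e., simply as reciprocals of nonzero field elements.

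There is no genuine obstacle here; the only content of the corollary beyond Theorem \ref{scmn} is the observation that the unit condition collapses to nonvanishing when the scalar ring is a field. Consequently the write-up can be done in one short paragraph, and no additional induction, case analysis, or calculation is required.
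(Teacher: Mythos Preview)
Your proposal is correct and matches the paper's own treatment: the paper offers no proof beyond the sentence ``The following corollary is an immediate consequence of Theorem \ref{scmn} for vector spaces,'' and your argument simply spells out why---namely that a field is a ring with identity, a vector space is a left module, and nonzero field elements are precisely the units, so Theorem \ref{scmn} applies directly.
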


\begin{example}
\label{CxR}The recurrence (\ref{qsc}) also has sc-factorizations in complex
vector spaces, although the choice of $a_{n}$ is more restricted in the
complex field $\mathbb{C}$. In particular, rather than an infinite number of
constants, the only constant values that work now are $\pm i$.

Over a real vector space, (\ref{qsc}) has no sc-factorization relative to the
linear form symmetry with constant $a_{n}=a\in\mathbb{R}$. However, (\ref{p2})
has an infinite number of non-constant solutions in $\mathbb{R}$ of period 2,
i.e. $\{a_{n}\}=\{a,-1/a,a,-1/a,\ldots\}$ for each real $a\not =0$. Each such
solution yields a version of (\ref{qsc}) that has a sc-factorization in any
vector space over $\mathbb{R}$.
\end{example}

Each of the equalities (\ref{esa}) and (\ref{esb}) is also a difference
equation and $\{\alpha_{n}\}$ is their common solution. The existence of this
common solution establishes the proper relationship among the coefficients
$a_{i,n}$ and $b_{i,n}$ for the occurrence of the linear form symmetry, which
is independent of the functions $g_{n}.$ We now identify this relationship
explicitly in the second-order case where $k=1$.

\begin{corollary}
\label{1}Consider the following second-order recurrence in $M$:%
\begin{equation}
x_{n+1}=a_{0,n}x_{n}+a_{1,n}x_{n-1}+g_{n}(b_{0,n}x_{n}+b_{1,n}x_{n-1}).
\label{g2}%
\end{equation}

If $b_{0,n},b_{1,n}$ are units in $R$ and the following equality holds:
\begin{equation}
a_{0,n}-a_{1,n}b_{1,n}^{-1}b_{0,n}+b_{0,n+1}^{-1}b_{1,n+1}=0 \label{id}%
\end{equation}
then (\ref{g2}) has a sc-factorization into first-order recurrences in $M$ as
follows:%
\begin{align*}
t_{n+1}  &  =a_{1,n}b_{1,n}^{-1}b_{0,n}t_{n}+g_{n}\left(  b_{0,n}t_{n}\right)
\\
x_{n+1}  &  =-b_{0,n+1}^{-1}b_{1,n+1}x_{n}+t_{n+1}.
\end{align*}

\end{corollary}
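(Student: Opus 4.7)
The plan is to derive this corollary as the direct specialization of Theorem \ref{scmn} to the case $k=1$. The work splits into (i) producing a unitary sequence $\{\rho_n\}$ that simultaneously satisfies (\ref{esa}) and (\ref{esb}) under the hypothesis (\ref{id}), and (ii) reading off the factor and cofactor equations from (\ref{mnf})--(\ref{mncf}).

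First I would solve (\ref{esb}) for $\alpha_{n-1}$. In the $k=1$ case, (\ref{esb}) reads
\[
b_{0,n}+b_{1,n}\alpha_{n-1}^{-1}=0,
\]
and since $b_{1,n}$ is a unit, this is equivalent to $\alpha_{n-1}^{-1}=-b_{1,n}^{-1}b_{0,n}$, i.e.\ $\alpha_{n-1}=-b_{0,n}^{-1}b_{1,n}$ (using that $b_{0,n}$ is a unit to see that this element is again a unit). Shifting the index, this forces the candidate
\[
\rho_n=-b_{0,n+1}^{-1}b_{1,n+1},
\]
which is a unit in $R$ for every $n$.

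Next I would substitute this $\rho_n$ into equation (\ref{esa}), which in the $k=1$ case reads $a_{0,n}+a_{1,n}\alpha_{n-1}^{-1}=\alpha_n$. Plugging in $\alpha_{n-1}^{-1}=-b_{1,n}^{-1}b_{0,n}$ and $\alpha_n=-b_{0,n+1}^{-1}b_{1,n+1}$, this becomes
\[
a_{0,n}-a_{1,n}b_{1,n}^{-1}b_{0,n}+b_{0,n+1}^{-1}b_{1,n+1}=0,
\]
which is exactly the hypothesis (\ref{id}). Thus (\ref{id}) is precisely the condition that the unit sequence $\{\rho_n\}$ satisfies both (\ref{esa}) and (\ref{esb}) simultaneously, so Theorem \ref{scmn} applies and yields the existence of the sc-factorization.

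Finally, I would specialize the formulas (\ref{mnf})--(\ref{mncf}) to $k=1$. The double sum collapses to the single term $j=i=1$, so $\gamma_{11}=\rho_{n-1}$ and $\gamma_{11}^{-1}=-b_{1,n}^{-1}b_{0,n}$. The factor equation becomes
\[
t_{n+1}=-a_{1,n}\rho_{n-1}^{-1}t_n+g_n\!\left(-b_{1,n}\rho_{n-1}^{-1}t_n\right)
=a_{1,n}b_{1,n}^{-1}b_{0,n}t_n+g_n(b_{0,n}t_n),
\]
while the cofactor equation is $x_{n+1}=\rho_n x_n+t_{n+1}=-b_{0,n+1}^{-1}b_{1,n+1}x_n+t_{n+1}$. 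These match the claimed pair, completing the proof. There is no genuine obstacle; the only care needed is in the order of factors when inverting $b_{1,n}\alpha_{n-1}^{-1}=-b_{0,n}$ in a possibly noncommutative $R$, which is why the hypothesis requires both $b_{0,n}$ and $b_{1,n}$ to be units rather than merely nonzero.
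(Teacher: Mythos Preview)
Your proof is correct and follows essentially the same approach as the paper: specialize Theorem \ref{scmn} to $k=1$, solve (\ref{esb}) for $\alpha_{n-1}$ to obtain $\rho_n=-b_{0,n+1}^{-1}b_{1,n+1}$, substitute into (\ref{esa}) to recover (\ref{id}), and read off the factor and cofactor equations. You include slightly more detail than the paper (which simply says the sc-factorization ``follows readily'') in explicitly collapsing the double sum in (\ref{mnf}) and noting the noncommutative inversion subtlety, but the route is identical.
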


\begin{proof}
A sequence of units $\{\alpha_{n}\}$ must satisfy (\ref{esa}) and (\ref{esb}),
i.e. both of the following must hold%
\[
a_{0,n}+a_{1,n}\alpha_{n-1}^{-1}=\alpha_{n},\qquad b_{0,n}+b_{1,n}\alpha
_{n-1}^{-1}=0
\]
for all $n$. If $b_{0,n},b_{1,n}$ are units then the second of the above
equalities yields $\alpha_{n-1}=-b_{0,n}^{-1}b_{1,n}$ or equivalently,
$\alpha_{n}=-b_{0,n+1}^{-1}b_{1,n+1}.$ This is also a sequence of units that
when substituted in the first equality and terms rearranged, we obtain
(\ref{id}). The sc-factorization now follows readily from Theorem \ref{scmn}.
\end{proof}

\medskip

We note that using (\ref{id}), an alternative form of the factor equation in
Corollary \ref{1} is the following:%
\[
t_{n+1}=-(a_{0,n}+b_{0,n+1}^{-1}b_{1,n+1})t_{n}+g_{n}\left(  b_{0,n}%
t_{n}\right)  .
\]

\section{Constant coefficients and polynomial roots\label{constc}}

If $a_{i,n},b_{i,n}$ are independent of $n$ then (\ref{esa}) and (\ref{esb})
may have a common fixed point (constant solution) in the group of units even
if (\ref{mn}) is still explicitly dependent on $n$ (i.e. is non-autonomous)
via $g_{n}$. This interesting and important special case is the subject of the
following consequence of Theorem \ref{scmn} that furnishes a sufficient
condition for the reducibility of (\ref{mn}) that is generally easier to check
than finding a common solution of (\ref{esa}) and (\ref{esb}).

\begin{theorem}
\label{fsor}Let $g_{n}$ be a sequence of functions on the $R$-module $M$ where
$R$ is a commutative ring with identity and let $a_{i},b_{i}\in R$ for
$i=0,1,\ldots k$. If the polynomials%
\[
P(\lambda)=\lambda^{k+1}-\sum_{i=0}^{k}a_{i}\lambda^{k-i},\quad Q(\lambda
)=\sum_{i=0}^{k}b_{i}\lambda^{k-i}%
\]
have a common root $\rho$ that is a unit in $R$ then the recurrence
\begin{equation}
x_{n+1}=\sum_{i=0}^{k}a_{i}x_{n-i}+g_{n}\left(  \sum_{i=0}^{k}b_{i}%
x_{n-i}\right)  \label{mna}%
\end{equation}
has the sc-factorization%
\begin{align}
t_{n+1}  &  =-\sum_{i=0}^{k-1}p_{i}t_{n-i}+g_{n}\left(  \sum_{i=0}^{k-1}%
q_{i}t_{n-i}\right) \label{fe}\\
x_{n+1}  &  =\rho x_{n}+t_{n+1} \label{cfe}%
\end{align}
where%
\[
p_{i}=\rho^{i+1}-a_{0}\rho^{i}-\cdots-a_{i}\quad\text{and\quad}q_{i}=b_{0}%
\rho^{i}+b_{1}\rho^{i-1}+\cdots+b_{i}%
\]

\end{theorem}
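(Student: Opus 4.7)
The plan is to apply Theorem \ref{scmn} to the \emph{constant} sequence $\rho_n \equiv \rho$, where $\rho$ is the given common root. Since $\rho$ is a unit, $\{\rho_n\}$ is automatically a sequence of units, so one need only verify (\ref{esa}) and (\ref{esb}). With every $\alpha_{n-i}=\rho$, each product $(\alpha_{n-1}\alpha_{n-2}\cdots\alpha_{n-j})^{-1}$ collapses to $\rho^{-j}$, so (\ref{esa}) becomes $\sum_{i=0}^{k} a_i \rho^{-i} = \rho$; multiplying both sides by $\rho^{k}$ gives exactly $P(\rho)=0$. The same substitution turns (\ref{esb}) into $\sum_{i=0}^{k} b_i \rho^{-i} = 0$, equivalent after multiplying by $\rho^{k}$ to $Q(\rho)=0$. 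Both conditions hold by hypothesis.

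Theorem \ref{scmn} then supplies the sc-factorization. The cofactor equation (\ref{mncf}) immediately becomes (\ref{cfe}) since $\rho_n = \rho$. For the factor equation I would note that $\gamma_{ij} = \prod_{l=i}^{j}\rho_{n-l} = \rho^{\,j-i+1}$, hence $\gamma_{ij}^{-1} = \rho^{\,i-j-1}$. Swapping the order of summation in (\ref{mnf}) and then reindexing $i \mapsto i'+1$ (so that $i'$ runs from $0$ to $k-1$), the coefficient of $t_{n-i'}$ in the outer sum becomes $\sum_{j=i'+1}^{k} a_j \rho^{\,i'-j}$ and its analog inside $g_n$ is $\sum_{j=i'+1}^{k} b_j \rho^{\,i'-j}$.

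It remains to identify these truncated sums with $p_{i'}$ and $-q_{i'}$ respectively. The $b$-side is the cleaner computation: multiplying $Q(\rho)=0$ by the unit $\rho^{\,i'-k}$ gives $\sum_{j=0}^{k} b_j \rho^{\,i'-j}=0$, so $-\sum_{j=i'+1}^{k} b_j \rho^{\,i'-j} = \sum_{j=0}^{i'} b_j \rho^{\,i'-j} = q_{i'}$, matching the argument of $g_n$ in (\ref{fe}). For the $a$-side I would induct on $i'$: the base case $i'=0$ follows from dividing $P(\rho)=0$ by $\rho^{k}$ to get $\sum_{j=1}^{k} a_j \rho^{-j} = \rho - a_0 = p_0$, and the inductive step is obtained by multiplying the previous identity by $\rho$ and peeling off $a_{i'}$. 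This yields $\sum_{j=i'+1}^{k} a_j \rho^{\,i'-j} = p_{i'}$, producing the coefficient $-p_{i'}$ of $t_{n-i'}$ in (\ref{fe}).

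The main obstacle is not conceptual but purely bookkeeping: interchanging the double sums correctly, tracking the reindexing, and translating the two root conditions $P(\rho)=0$ and $Q(\rho)=0$ into the truncated telescoping identities for $p_i$ and $q_i$. Commutativity of $R$ is used tacitly when moving powers of $\rho$ past $a_j$ and $b_j$ to collect terms; this is why the hypothesis restricts $R$ to the commutative setting rather than the general ring-with-identity setting of Theorem \ref{scmn}.
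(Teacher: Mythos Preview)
Your proof is correct and follows essentially the same approach as the paper: verify that the constant sequence $\alpha_n=\rho$ satisfies (\ref{esa}) and (\ref{esb}) via $P(\rho)=Q(\rho)=0$, invoke Theorem~\ref{scmn}, then simplify (\ref{mnf}) by swapping the double sum and using the root conditions to rewrite the tail sums as $p_{i}$ and $q_{i}$. The only cosmetic difference is that the paper handles the $a$-side identity by a direct substitution from $P(\rho)=0$ (exactly as you did for the $b$-side with $Q(\rho)=0$) rather than by the short induction you propose, but the content is identical.
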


\begin{proof}
If with constant coefficients $a_{i}$ (\ref{esa}) has a constant solution
$\alpha_{n}=\alpha$ that is a unit in $R$ then $\alpha$ satisfies the equality%
\[
a_{0}+a_{1}\alpha^{-1}+a_{2}\left(  \alpha^{2}\right)  ^{-1}+\cdots
+a_{k}\left(  \alpha^{k}\right)  ^{-1}=\alpha
\]

Multiplication by $\alpha^{k}$ shows this equality to be equivalent to%
\[
\alpha^{k+1}-a_{0}\alpha^{k}-\cdots a_{k-1}\alpha-a_{k}=0
\]
i.e. $\alpha$ is a root of the polynomial $P.$ Conversely, any unit root of
$P$ is evidently a constant solution of (\ref{esa}). Similarly, $\alpha$ is a
constant solution of (\ref{esb}) with constant coefficients $b_{i}$ if and
only if it is a root of $Q.$ Therefore, a common root $\rho$ of $P$ and $Q$ is
a common, constant solution of (\ref{esa}) and (\ref{esb}) and if $\rho$ is a
unit in $R$ then (\ref{mna}) has a sc-factorization by Theorem \ref{scmn}.

Next, to find (\ref{fe}) we start with (\ref{mnf})%
\[
t_{n+1}=-\sum_{j=1}^{k}\sum_{i=1}^{j}a_{j}\left(  \gamma_{ij}\right)
^{-1}t_{n-i+1}+g_{n}\left(  -\sum_{j=1}^{k}\sum_{i=1}^{j}b_{j}\left(
\gamma_{ij}\right)  ^{-1}t_{n-i+1}\right)
\]
where for each $i=1,\cdots,k,$
\begin{align*}
\sum_{j=1}^{k}\sum_{i=1}^{j}a_{j}\left(  \gamma_{ij}\right)  ^{-1}t_{n-i+1}
&  =\sum_{j=1}^{k}\sum_{i=1}^{j}a_{j}\left(  \rho^{j-i+1}\right)
^{-1}t_{n-i+1}\\
&  =\sum_{i=1}^{k}\sum_{j=i}^{k}a_{j}\left(  \rho^{-1}\right)  ^{j-i+1}%
t_{n-i+1}\\
&  =\sum_{i=1}^{k}(a_{i}\rho^{k-i+1}+a_{i+1}\rho^{k-i}+\cdots+a_{k})(\rho
^{-1})^{k-i+1}t_{n-i+1}%
\end{align*}

Since $\rho$ is a unit root of the polynomial $P$ it follows that%
\begin{align*}
\sum_{j=1}^{k}\sum_{i=1}^{j}a_{j}\left(  \gamma_{ij}\right)  ^{-1}t_{n-i+1}
&  =\sum_{i=1}^{k}(\rho^{k+1}-a_{0}\rho^{k}-\cdots-a_{i-1}\rho^{k-i+1}%
)(\rho^{-1})^{k-i+1}t_{n-i+1}\\
&  =\sum_{i=1}^{k}p_{i-1}t_{n-i+1}%
\end{align*}
where $p_{i}$ is as defined in the statement of the corollary. The last
quantity is clearly equivalent to $\sum_{i=0}^{k-1}p_{i}t_{n-i}$ as presented
in (\ref{fe}). Similarly, since $\rho$ is also a root of the polynomial $Q$ it
follows that%
\[
\sum_{j=1}^{k}\sum_{i=1}^{j}b_{j}\left(  \gamma_{ij}\right)  ^{-1}%
t_{n-i+1}=\sum_{i=0}^{k-1}q_{i}t_{n-i+1}%
\]
where $q_{i}$ is as defined in the statement of the corollary. This completes
the derivation of (\ref{fe}); finally, (\ref{cfe}) is clear from (\ref{mncf}).
\end{proof}

\medskip

\begin{remark}
1. If the coefficients $a_{j,n}=a_{j}$ are constants for $j=0,1,\ldots,k$ and
all $n$ then (\ref{esa}) and (\ref{esb}) can be written in the equivalent
forms%
\begin{align}
\alpha_{n}\alpha_{n-1}\cdots\alpha_{n-k}-a_{0}\alpha_{n-1}\alpha_{n-2}%
\cdots\alpha_{n-k}-a_{1}\alpha_{n-2}\cdots\alpha_{n-k}-\cdots-a_{k}  &
=0\label{ce1}\\
b_{0}\alpha_{n-1}\alpha_{n-2}\cdots\alpha_{n-k}+b_{1}\alpha_{n-2}\cdots
\alpha_{n-k}+\cdots+b_{k}  &  =0 \label{ce2}%
\end{align}

These polynomial difference equations resemble the polynomials $P$ and $Q$ in
the preceding corollary in that the order of each term in them equals the
power of $\lambda$ in the corresponding term of $P$ or $Q$.

2. \noindent Note also that the factor quation (\ref{fe}) is of the same type
as (\ref{mna}) but with order reduced by one. Hence, Theorem \ref{fsor} can be
applied again to reduce the order of (\ref{fe}) further, if the associated
polynomials of (\ref{fe}) also share a common unit root. This happens when $P$
and $Q$ share more than one unit root; see the next section$.$ The theory for
repeated applications of sc-factorization on fields is discussed in
\cite{fsor}.
\end{remark}

\begin{example}
Let $M$ be an arbitrary module over the finite ring $\mathbb{Z}_{m}$ of
integers modulo $m\geq3$ and consider the third-order recurrence%
\begin{equation}
x_{n+1}=2x_{n-1}+x_{n-2}+g_{n}(x_{n}-x_{n-2}) \label{zp}%
\end{equation}
in $M.$ This recurrence is of type (\ref{mna}) with coefficients $a_{0}=0$,
$a_{1}=2$, $a_{2}=1$ and $b_{0}=1$, $b_{1}=0$, $b_{2}=-1$. These values yield
the polynomials
\[
P(\lambda)=\lambda^{3}-2\lambda-1,\qquad Q(\lambda)=\lambda^{2}-1.
\]

These share a unit root $\rho=-1$ in $\mathbb{Z}_{m}$ so Theorem \ref{fsor}
applies with $p_{0}=\rho=-1$, $p_{1}=\rho^{2}-2=-1$ and $q_{0}=1$, $q_{1}%
=\rho=-1.$ We obtain the sc-factorization%
\[
t_{n+1}=t_{n}+t_{n-1}+g_{n}(t_{n}-t_{n-1}),\qquad x_{n+1}=-x_{n}+t_{n+1}.
\]

\end{example}

The following is the common special case of Corollary \ref{1} and Theorem
\ref{fsor}.

\begin{corollary}
\label{2}Let $g_{n}$ be a sequence of functions on the $R$-module $M$ where
$R$ is a commutative ring with identity and let $a_{0},a_{1},b_{0},b_{1}$ be
fixed elements in the ring $R$. If $b_{0}$ and $b_{1}$ are units in $R$ and
the following equality holds:
\begin{equation}
a_{0}-a_{1}b_{1}^{-1}b_{0}+b_{0}^{-1}b_{1}=0 \label{c}%
\end{equation}
then the recurrence%
\begin{equation}
x_{n+1}=a_{0}x_{n}+a_{1}x_{n-1}+g_{n}(b_{0}x_{n}+b_{1}x_{n-1}) \label{g2a}%
\end{equation}
has a sc-factorization in $M$ that is given by%
\begin{align*}
t_{n+1}  &  =a_{1}b_{1}^{-1}b_{0}t_{n}+g_{n}\left(  b_{0}t_{n}\right) \\
x_{n+1}  &  =-b_{0}^{-1}b_{1}x_{n}+t_{n+1}.
\end{align*}

\end{corollary}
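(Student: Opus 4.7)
My plan is to deduce this corollary as a simultaneous specialization of Corollary \ref{1} (constant coefficients, $k=1$) and Theorem \ref{fsor} ($k=1$ and $R$ commutative). Since both give the same conclusion for this situation, I would take the cleanest route, which is the polynomial-root version via Theorem \ref{fsor}, and then sanity-check against Corollary \ref{1} to confirm that the stated factor and cofactor equations are correct in their given form.

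First I would form the two characteristic polynomials associated with (\ref{g2a}):
\[
P(\lambda)=\lambda^{2}-a_{0}\lambda-a_{1},\qquad Q(\lambda)=b_{0}\lambda+b_{1}.
\]
Since $b_{0}$ is a unit, the unique root of $Q$ is $\rho=-b_{0}^{-1}b_{1}$, and because $b_{1}$ is also a unit (and $-1$ is a unit in any ring with identity), $\rho$ is a unit in $R$. The key algebraic step is to observe that the condition ``$\rho$ is also a root of $P$'' is exactly hypothesis (\ref{c}). Indeed, substituting $\rho=-b_{0}^{-1}b_{1}$ into $P(\rho)=0$ and using commutativity of $R$ gives
\[
(b_{0}^{-1}b_{1})^{2}+a_{0}b_{0}^{-1}b_{1}-a_{1}=0,
\]
and multiplying on the right by $b_{0}b_{1}^{-1}$ (which exists since both $b_{0},b_{1}$ are units) yields precisely $b_{0}^{-1}b_{1}+a_{0}-a_{1}b_{1}^{-1}b_{0}=0$, i.e.\ (\ref{c}). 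Thus (\ref{c}) is equivalent to $\rho=-b_{0}^{-1}b_{1}$ being a common unit root of $P$ and $Q$.

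With this common unit root in hand, Theorem \ref{fsor} applies and gives the sc-factorization (\ref{fe})--(\ref{cfe}) with $k=1$. The only surviving coefficients are $p_{0}=\rho-a_{0}$ and $q_{0}=b_{0}$, so the factor equation reads $t_{n+1}=(a_{0}-\rho)t_{n}+g_{n}(b_{0}t_{n})$ and the cofactor equation reads $x_{n+1}=\rho x_{n}+t_{n+1}$. Substituting $\rho=-b_{0}^{-1}b_{1}$ makes the cofactor equation exactly the stated $x_{n+1}=-b_{0}^{-1}b_{1}x_{n}+t_{n+1}$, and using (\ref{c}) in the form $a_{0}-\rho=a_{0}+b_{0}^{-1}b_{1}=a_{1}b_{1}^{-1}b_{0}$ turns the factor equation into $t_{n+1}=a_{1}b_{1}^{-1}b_{0}t_{n}+g_{n}(b_{0}t_{n})$, as claimed.

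There is really no hard step here; the only place requiring care is the equivalence between condition (\ref{c}) and $P(\rho)=0$, because one must use commutativity to rearrange the products $b_{0}^{-1}b_{1}$ and $a_{1}b_{1}^{-1}b_{0}$ consistently. As a cross-check, one may note that the same corollary follows from Corollary \ref{1} by taking the coefficient sequences to be the constants $a_{i,n}=a_{i}$, $b_{i,n}=b_{i}$: condition (\ref{id}) then collapses to (\ref{c}) and the resulting factor/cofactor equations coincide with the ones above, which confirms the result.
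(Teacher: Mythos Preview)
Your proposal is correct and follows essentially the same route as the paper: form $P(\lambda)=\lambda^{2}-a_{0}\lambda-a_{1}$ and $Q(\lambda)=b_{0}\lambda+b_{1}$, observe that the unique root $\rho=-b_{0}^{-1}b_{1}$ of $Q$ is a unit, verify that $P(\rho)=0$ is equivalent to (\ref{c}), and then invoke Theorem \ref{fsor} (with the cross-check against Corollary \ref{1}). Your write-up is in fact more detailed than the paper's, which dispatches the computation of the factor and cofactor equations with ``obviously true by Theorem \ref{fsor}''; your explicit identification of $p_{0}$, $q_{0}$ and the use of (\ref{c}) to rewrite $a_{0}-\rho$ is exactly what is needed to recover the stated form.
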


\begin{proof}
The statements are obviously true by Theorem \ref{fsor}. To see that they are
also true by Corollary \ref{1}, note that for (\ref{g2a})%
\[
P(\lambda)=\lambda^{2}-a_{0}\lambda-a_{1},\quad Q(\lambda)=b_{0}\lambda+b_{1}%
\]

Now $Q(\lambda)=0$ if and only if $\lambda=-b_{0}^{-1}b_{1}$ which is a unit.
Next, $P(-b_{0}^{-1}b_{1})=0$ if and only if (\ref{c}) holds and the proof is complete.
\end{proof}

\medskip

An application of the above corollary to a system of nonlinear, second-order
equations is given in Example \ref{ds} below. For linear non-homogeneous
equations in rings, Corollary \ref{fsor} can be applied even to certain types
of variable coefficients, as in the next result.

\begin{corollary}
\label{fsor1}Let $a_{i},b_{i}\in R$ for $i=0,1,\ldots k$ and $c_{n},d_{n}\in
R$ for $n\geq1$ where $R$ is a commutative ring with identity. If the
polynomials $P$ and $Q$ in Theorem \ref{fsor} have a common root $\rho$ that
is a unit $R$ then the recurrence
\begin{equation}
x_{n+1}=\sum_{i=0}^{k}(a_{i}+b_{i}c_{n})x_{n-i}+d_{n} \label{lna}%
\end{equation}
with variable coefficients has a sc-factorization in the ring $R$ into the
following pair of linear non-homogeneous recurrences:
\[
t_{n+1}=-\sum_{i=0}^{k-1}(p_{i}+q_{i}c_{n})t_{n-i}+d_{n},\quad x_{n+1}=\rho
x_{n}+t_{n+1}%
\]
where $p_{i},q_{i}$ are as defined in Theorem \ref{fsor}.
\end{corollary}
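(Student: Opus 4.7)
The plan is to view (\ref{lna}) as the instance of the general equation (\ref{mna}) in which $g_n : R \to R$ is the affine map $g_n(u) = c_n u + d_n$. Using the commutativity of $R$ to move $c_n$ past each $b_i$,
$$\sum_{i=0}^{k} a_i x_{n-i} + g_n\!\left(\sum_{i=0}^{k} b_i x_{n-i}\right) = \sum_{i=0}^{k} a_i x_{n-i} + c_n \sum_{i=0}^{k} b_i x_{n-i} + d_n = \sum_{i=0}^{k}(a_i + b_i c_n)\, x_{n-i} + d_n,$$
which is precisely (\ref{lna}). Thus (\ref{lna}) is already in the form (\ref{mna}) with this specific choice of $g_n$, and the hypothesis of Theorem \ref{fsor} --- existence of a common unit root $\rho$ of $P$ and $Q$ --- is exactly what is assumed, so that theorem may be invoked directly.

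Applying Theorem \ref{fsor} then produces the sc-factorization (\ref{fe})-(\ref{cfe}) with the same $p_i$ and $q_i$ obtained from $\rho$. The cofactor equation $x_{n+1} = \rho x_n + t_{n+1}$ is already in the form claimed, so only the factor equation remains to be unpacked.

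The final step is to substitute $g_n(u) = c_n u + d_n$ into (\ref{fe}). This opens the $g_n$ term into $c_n \sum_{i=0}^{k-1} q_i t_{n-i} + d_n$; combining with the $-\sum_{i=0}^{k-1} p_i t_{n-i}$ term (again using commutativity of $R$ to pool the coefficients of each $t_{n-i}$) collapses the factor equation into a single linear non-homogeneous recurrence of the stated shape, with $d_n$ as the forcing term.

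There is no serious obstacle: the corollary is essentially the linear non-homogeneous specialization of Theorem \ref{fsor}, and the proof is mechanical once one observes that an affine $g_n$ keeps the equation within the linear category. The only point of care is that $c_n$ must commute past the $b_i$ and $q_i$ for the expansions to fold up into collected-coefficient form, and this is exactly guaranteed by the hypothesis that $R$ is commutative.
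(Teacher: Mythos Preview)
Your approach is correct and is essentially identical to the paper's own proof: define $g_n(r)=c_n r+d_n$, recognize (\ref{lna}) as the resulting instance of (\ref{mna}), and invoke Theorem~\ref{fsor}. Your write-up simply spells out in more detail the regrouping that the paper summarizes in one line.
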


\begin{proof}
Define $g_{n}:R\rightarrow R$ as $g_{n}(r)=c_{n}r+d_{n}.$ Using these $g_{n}$
in (\ref{mna}) and re-grouping terms yields (\ref{lna}). Now apply Theorem
\ref{fsor} to conclude the proof.
\end{proof}

\section{sc-factorization of higher order systems\label{sys}}

As previously mentioned, a recurrence of order $k$ in a field may be unfolded
to a system of first-order recurrences in a $k$-dimensional vector space over
the field. Such a representation is not unique but a standard version of it is
(\ref{veq}). The reverse process where a first-order system is folded to a
higher order recurrence is also possible; see \cite{sedF} and its bibliography
for the method and its background. This folding process is useful when the
higher order equation is familiar or more tractable than the system that
yields it.

Modules lead to a different kind of folding that we discuss in this section.
Certain systems of \textit{higher order} recurrences in a direct product of a
ring with itself may be represented by higher order recurrences in modules. To
illustrate this case, consider (\ref{sed1}) in a commutative ring $R$ with
identity. Let $x_{n}=(x_{1,n},x_{2,n})$ which is in $R^{2}=R\times R.$ We
write $g_{n}:R^{2}\rightarrow R^{2}$ as%
\[
g_{n}(u,v)=(g_{n}^{1}(u,v),g_{n}^{2}(u,v))
\]
with component functions $g_{n}^{1},g_{n}^{2}:R^{2}\rightarrow R$ and obtain
the system form of (\ref{sed1}) as%
\begin{align}
x_{1,n+1}  &  =ax_{1,n}+bx_{1,n-1}+g_{n}^{1}(x_{1,n}-cx_{1,n-1},x_{2,n}%
-cx_{2,n-1})\label{nso2a}\\
x_{2,n+1}  &  =ax_{2,n}+bx_{2,n-1}+g_{n}^{2}(x_{1,n}-cx_{1,n-1},x_{2,n}%
-cx_{2,n-1}) \label{nso2b}%
\end{align}

Note that dependence of $x_{1,n+1}$ on $x_{2,n}$ or $x_{2,n-1}$ (and
similarly, of $x_{2,n+1}$ on $x_{1,n}$ or $x_{1,n-1}$) occurs via the maps
$g_{n}^{i}$ and this dependence may occur linearly as well as in a nonlinear
way. To highlight this aspect, we extract the possible linear terms from the
maps $g_{n}^{i}$ as follows:%
\[
g_{n}^{i}(u,v)=A_{i,n}u+B_{i,n}v+h_{n}^{i}(u,v),\quad i=1,2
\]
where $A_{i,n},B_{i,n}\in R$ for all $n$ and $h_{n}^{i}$ do not have linear
terms. Now we substitute into (\ref{nso2a})-(\ref{nso2b}) and rearrange terms
to obtain the equivalent system%
\begin{align}
x_{1,n+1}  &  =(a+A_{1,n})x_{1,n}+(b-cA_{1,n})x_{1,n-1}+B_{1,n}x_{2,n}%
-cB_{1,n}x_{2,n-1}+\label{gh1}\\
&  \hspace{3in}h_{n}^{1}(x_{1,n}-cx_{1,n-1},x_{2,n}-cx_{2,n-1})\nonumber\\
x_{2,n+1}  &  =A_{2,n}x_{1,n}-cA_{2,n}x_{1,n-1}+(a+B_{2,n})x_{2,n}%
+(b-cB_{2,n})x_{2,n-1}+\label{gh2}\\
&  \hspace{3in}h_{n}^{2}(x_{1,n}-cx_{1,n-1},x_{2,n}-cx_{2,n-1})\nonumber
\end{align}

In the form (\ref{gh1})-(\ref{gh2}) we see that the coefficients of the linear
terms of the system are less obvious than in the form (\ref{nso2a}%
)-(\ref{nso2b}).

We consider (\ref{sed1}) a representation of (\ref{gh1})-(\ref{gh2}), or of
(\ref{nso2a})-(\ref{nso2b}) in the direct product $R\times R$ viewed as a left
$R$-module. The results on semiconjugate factorization discussed above depend
on the coefficients $a,b,c_{n}$, etc rather than on the functions $h_{n}^{i}$
or on the modules; therefore, they apply to a broad range of systems,
including those in higher dimensions.

Once the equivalent recurrence for a system is found, we obtain the lower
order factor and cofactor equations in the module, which we can transform back
to systems. This pair of lower order systems is then considered a
sc-factorization of the original system. In the next example, we illustrate
the application of Corollary \ref{2} to a system of second-order recurrences
that can be represented in the form (\ref{sed1}).

\begin{example}
\label{ds}Let $c_{n},d_{n}$ be nonzero complex numbers for every $n$ and
consider the following system of higher order difference equations in the
vector space $\mathbb{C}^{2}$
\begin{align}
x_{1,n+1}  &  =x_{1,n}+\frac{c_{n}(x_{1,n}-x_{1,n-1})}{x_{2,n}-x_{2,n-1}%
}\label{hos1}\\
x_{2,n+1}  &  =x_{2,n}+d_{n}(x_{1,n}-x_{1,n-1}) \label{hos2}%
\end{align}

As this is also a nonlinear system, well-known methods are not available in
the literature for analyzing its solutions. In fact, given the possibility
that 0 may occur in the denominator of (\ref{hos1}) at some iteration $n$
where $x_{2,n}=x_{2,n-1}$, we need to verify the existence of solutions. The
sc-factorization of (\ref{hos1})-(\ref{hos2}) not only clarifies the existence
of its solutions, but it may also be used to calculate those solutions. In the
case of (\ref{hos1})-(\ref{hos2}), we define the functions $g_{n}%
:\mathbb{C}^{2}\rightarrow\mathbb{C}^{2}$ as
\[
g_{n}(u,v)=\left(  c_{n}\frac{u}{v},d_{n}u\right)
\]

Then, with $x_{n}=(x_{1,n},x_{2,n})$ the system (\ref{hos1})-(\ref{hos2}) can
be written as%
\begin{equation}
x_{n+1}=x_{n}+g_{n}(x_{n}-x_{n-1}) \label{hos}%
\end{equation}
which is a recurrence of type (\ref{g2a}) with $a_{0}=1$, $a_{1}=0$, $b_{0}=1$
and $b_{1}=-1$. These numbers clearly satisfy (\ref{c}) so Corollary \ref{2}
yields the following sc-factorization for (\ref{hos}):%
\begin{align}
t_{n+1}  &  =g_{n}\left(  t_{n}\right) \label{hosf}\\
x_{n+1}  &  =x_{n}+t_{n+1} \label{hosc}%
\end{align}

Note that each of the factor and cofactor equations is a first-order system;
(\ref{hosf}) is the system%
\begin{equation}
t_{1,n+1}=c_{n}\frac{t_{1,n}}{t_{2,n}},\quad t_{2,n+1}=d_{n}t_{1,n}
\label{hosf1}%
\end{equation}
that we call the factor system of (\ref{hos1})-(\ref{hos2}) and (\ref{hosc})
is%
\[
x_{1,n+1}=x_{1,n}+t_{1,n+1},\quad x_{2,n+1}=x_{2,n}+t_{2,n+1}%
\]
i.e. the cofactor system. Each of the above systems is relatively simple to
analyze. For instance, if the ratio $c_{n}/d_{n-1}=\delta$ is constant then
from (\ref{hosf1}) we obtain%
\[
t_{1,n+1}=c_{n}\frac{t_{1,n}}{t_{2,n}}=c_{n}\frac{t_{1,n}}{d_{n-1}t_{1,n-1}%
}=\frac{\delta t_{1,n}}{t_{1,n-1}}%
\]

Iterating this recurrence with initial values $t_{1,1}=x_{1,1}-x_{1,0}$,
$t_{1,2}=c_{1}t_{1,1}/t_{2,1}$ where $t_{2,1}=x_{2,1}-x_{2,0}$ and using
induction, we obtain the following solution of period 6 for it:%
\[
\{t_{1,n}\}=\left\{  t_{1,1},t_{1,2},\frac{\delta t_{1,2}}{t_{1,1}}%
,\frac{\delta^{2}}{t_{1,1}},\frac{\delta^{2}}{t_{1,2}},\frac{\delta t_{1,1}%
}{t_{1,2}},\ldots\right\}
\]

Using this, we then obtain $t_{2,n}=d_{n-1}t_{1,n-1}$ for $n\geq2$, which need
not be periodic and further,%
\[
x_{1,n}=x_{1,0}+\left[  \frac{n}{6}\right]  \left(  t_{1,1}+t_{1,2}%
+\frac{\delta t_{1,2}}{t_{1,1}}+\frac{\delta^{2}}{t_{1,1}}+\frac{\delta^{2}%
}{t_{1,2}}+\frac{\delta t_{1,1}}{t_{1,2}}\right)  +\sum_{j=1}^{r_{n}}t_{1,j}%
\]
where $[m]$ is the greatest integer less than or equal to $m$ and $r_{n}$ is
the remainder of the fraction $n/6$ (the sum is dropped if $r_{n}=0$). We
calculate $x_{2,n}$ similarly to obtain the complete solution of the system
(\ref{hos1})-(\ref{hos2}).

The above calculations show, in particular, that (\ref{hos1})-(\ref{hos2}) has
solutions as long as $t_{1,1},t_{1,2}\not =0.$ Since
\[
t_{1,2}=c_{1}\frac{t_{1,1}}{t_{2,1}}=\frac{c_{1}(x_{1,1}-x_{1,0})}%
{x_{2,1}-x_{2,0}}%
\]
the existence of solutions is established if the initial values satisfy
$x_{1,1}\not =x_{1,0}$ and $x_{2,1}\not =x_{2,0}.$ A little more calculation
shows that this conclusion is valid even for the non-autonomous case where
$c_{n}/d_{n-1}$ is not constant and thus $\{t_{1,n}\}$ may not be periodic.
\end{example}

The next example discusses a variation of the system (\ref{hos1})-(\ref{hos2}).

\begin{example}
Consider the following system in the vector space $\mathbb{C}^{2}%
=\mathbb{C}\times\mathbb{C}$:
\begin{align}
x_{1,n+1}  &  =x_{1,n}+\frac{c_{n}(x_{1,n}-x_{1,n-1})}{x_{2,n}-x_{2,n-1}%
}\label{m1}\\
x_{2,n+1}  &  =x_{2,n-1}+d_{n}(x_{1,n}-x_{1,n-1}) \label{m2}%
\end{align}

For this system we use the form (\ref{gh1})-(\ref{gh2}) with $c=1$ and
parameters defined via the following equations for all $n$%
\begin{align*}
a+A_{1,n}  &  =1,\quad b-A_{1,n}=0,\quad B_{1,n}=0\\
a+B_{2,n}  &  =0,\quad b-B_{2,n}=1,\quad A_{2,n}=d_{n}%
\end{align*}

From the above equations we obtain
\begin{equation}
A_{1,n}=b=1-a,\quad B_{2,n}=-a,\quad A_{2,n}=d_{n},\quad B_{1,n}=0 \label{ps}%
\end{equation}
for all $n$ for arbitrary $a\in R.$ These values define the functions%
\[
g_{n}(u,v)=\left(  (1-a)u+\frac{c_{n}u}{v},d_{n}u-av\right)
\]
and the system (\ref{m1})-(\ref{m2}) is represented by the recurrence%
\begin{equation}
x_{n+1}=ax_{n}+(1-a)x_{n-1}+g_{n}(x_{n}-x_{n-1}) \label{mrep}%
\end{equation}

The coefficients in the above equations satisfy (\ref{c}) with $a_{0}=a$,
$a_{1}=1-a$, $b_{0}=1$ and $b_{1}=-1$ so Corollary \ref{2} yields the
sc-factorization%
\begin{align*}
t_{n+1}  &  =g_{n}\left(  t_{n}\right)  -(1-a)t_{n}\\
x_{n+1}  &  =x_{n}+t_{n+1}.
\end{align*}

The special values $a=0,1$ simplify the various expressions. With $a=1$ we
obtain
\[
g_{n}(u,v)=\left(  \frac{c_{n}u}{v},d_{n}u-v\right)
\]
and the factor equation in the sc-factorization reduces to $t_{n+1}%
=g_{n}\left(  t_{n}\right)  .$ Note also that in this case, (\ref{mrep}) is
the same as (\ref{hos}). With $a=0$ we have%
\[
g_{n}(u,v)=\left(  u+\frac{c_{n}u}{v},d_{n}u\right)  =u\left(  1+\frac{c_{n}%
}{v},d_{n}\right)
\]
and the factor equation is $t_{n+1}=g_{n}\left(  t_{n}\right)  -t_{n}$. In
this case, (\ref{mrep}) is not the same as (\ref{hos}); rather $x_{n+1}%
=x_{n-1}+g_{n}(x_{n}-x_{n-1})$ is the representing recurrence on
$\mathbb{C}^{2}$ now.
\end{example}

Note that the preceding example also shows that representations of systems in
rings by equations in modules (and thus also the corresponding
sc-factroizations) are \textit{not unique} in general.

The ideas in the examples of this section extend to systems of recurrences of
arbitrary order in vector spaces of any dimension (including infinite, as in
Example \ref{nrmo3} below) as long as it is possible to transform the system
to an equation of type (\ref{mna}) in the vector space. We emphasize that this
transformation is not possible in general and leave as an open problem a
classification of systems that can be represented by recurrences in modules.

\section{Polynomial roots and repeated sc-factorization\label{repfac}}

An important by-product of Theorem \ref{fsor} is that the factor equation of
(\ref{mna}), namely, (\ref{fe}) is of the same type as (\ref{mna}), but has a
lower order. This feature, which is also one of the advantages that the linear
form symmetry enjoys compared with other form symmetries, suggests that
Theorem \ref{fsor} can be applied repeatedly, each time reducing the order of
the original recurrence by one, as long as common, unit polynomial roots exist.

The next result simplifies the task of searching for shared polynomial roots
by limiting it to all common roots of the \textit{original} pair of
polynomials $P$ and $Q.$

\begin{lemma}
\label{replem}Let $R$ be an integral domain and let $\rho\in R$ be a common
unit root of $P$ and $Q$. If $\rho_{1}\in R$ is another shared unit root of
$P$ and $Q$ in $R$ then $\rho_{1}$ is also a common unit root of both of the
following polynomials%
\begin{align*}
P_{1}(\lambda)  &  =\lambda^{k}+p_{0}\lambda^{k-1}+p_{1}\lambda^{k-2}%
+\cdots+p_{k-1}\\
Q_{1}(\lambda)  &  =q_{0}\lambda^{k-1}+q_{1}\lambda^{k-2}+\cdots+q_{k-1}%
\end{align*}
where the coefficients $p_{i}$ and $q_{i}$ are defined in Theorem \ref{fsor}.
\end{lemma}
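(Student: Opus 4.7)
The plan is to prove Lemma \ref{replem} by first establishing the polynomial identities
$$P(\lambda) = (\lambda - \rho)\,P_1(\lambda), \qquad Q(\lambda) = (\lambda - \rho)\,Q_1(\lambda),$$
after which the conclusion is a one-line application of the integral domain hypothesis. Put differently, $P_1$ and $Q_1$ are exactly the quotients when $P$ and $Q$ are divided by $\lambda - \rho$; once that is seen, the result drops out.

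To verify the first identity I would expand $(\lambda - \rho) P_1(\lambda)$ coefficient by coefficient. The product has degree $k+1$; its leading coefficient is $1$, the coefficient of $\lambda^k$ is $p_0 - \rho$, the coefficient of $\lambda^{k-i}$ is $p_i - \rho p_{i-1}$ for $1 \le i \le k-1$, and the constant term is $-\rho p_{k-1}$. Using the definition $p_i = \rho^{i+1} - a_0 \rho^i - \cdots - a_i$, a short telescoping computation gives $p_0 - \rho = -a_0$ and $p_i - \rho p_{i-1} = -a_i$, matching every coefficient of $P$ down to the constant. The constant term requires the root hypothesis: $-\rho p_{k-1} = -\rho^{k+1} + a_0 \rho^k + \cdots + a_{k-1}\rho$, which equals $-a_k$ precisely because $\rho^{k+1} - a_0\rho^k - \cdots - a_k = P(\rho) = 0$. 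The identical template, applied with $q_i = b_0\rho^i + \cdots + b_i$ and $Q(\rho) = 0$, yields the companion identity for $Q$.

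With these factorizations in hand, any shared unit root $\rho_1 \neq \rho$ satisfies
$$0 = P(\rho_1) = (\rho_1 - \rho)\,P_1(\rho_1), \qquad 0 = Q(\rho_1) = (\rho_1 - \rho)\,Q_1(\rho_1),$$
and since $R$ is an integral domain with $\rho_1 - \rho \neq 0$, cancellation forces $P_1(\rho_1) = Q_1(\rho_1) = 0$. The unit hypothesis on $\rho_1$ is inherited from the statement, so $\rho_1$ is indeed a common unit root of $P_1$ and $Q_1$.

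The only non-routine step is the coefficient-matching calculation, and even there the main obstacle is purely notational: one must see past the double-indexed form of $p_i$ and $q_i$ to recognize them as the Horner coefficients of synthetic division by $\lambda - \rho$. Notably, no use is made of $\rho$ itself being a unit in transferring roots from $(P,Q)$ to $(P_1,Q_1)$; the integral domain hypothesis and the fact that $\rho$ is \emph{a root} carry all the weight, which is what makes the lemma efficient for the iterated application of Theorem \ref{fsor} described in Remark 2.
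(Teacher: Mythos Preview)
Your argument is essentially the paper's: you establish the factorizations $P(\lambda)=(\lambda-\rho)P_1(\lambda)$ and $Q(\lambda)=(\lambda-\rho)Q_1(\lambda)$ by the same telescoping coefficient comparison, then cancel $\rho_1-\rho$ in the integral domain. For $\rho_1\neq\rho$ this is complete and matches the paper line for line.

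The gap is the case $\rho_1=\rho$. The lemma, as used downstream in Corollary~\ref{repalor}(c) and illustrated in Example~\ref{exzp} (where $\rho_1=\rho_2=3$ in $\mathbb{Z}_5$), is meant to cover repeated roots: ``another shared unit root'' is understood with multiplicity, so $\rho_1$ may coincide with $\rho$ when $\rho$ is a common double root of $P$ and $Q$. Your cancellation step fails there because $\rho_1-\rho=0$. The paper handles this case separately by computing directly that $P_1(\rho)=P'(\rho)$ (and likewise $Q_1(\rho)=Q'(\rho)$), then invoking $P'(\rho)=Q'(\rho)=0$, which is what ``double root'' means. Alternatively, and more in the spirit of your approach, you could argue that if $(\lambda-\rho)^2$ divides $P$ in $R[\lambda]$ then, since $R[\lambda]$ is an integral domain and $P=(\lambda-\rho)P_1$, cancellation gives $(\lambda-\rho)\mid P_1$, i.e.\ $P_1(\rho)=0$. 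Either fix is short, but one of them is needed.
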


\begin{proof}
By assumption,%
\begin{equation}
P(\rho_{1})=Q(\rho_{1})=0. \label{pq}%
\end{equation}

Now%
\begin{align*}
(\lambda-\rho)P_{1}(\lambda)  &  =(\lambda-\rho)\left(  \lambda^{k}+\sum
_{j=0}^{k-1}p_{\,j}\lambda^{k-j-1}\right) \\
&  =\lambda^{k+1}+\sum_{j=0}^{k-1}[p_{\,j}-\rho p_{\,j-1}]\lambda^{k-j}-\rho
p_{k-1}.
\end{align*}
where we define $p_{-1}=1$ to simplify the notation. For each $j=0,1,\ldots
,k-1$ note that%
\[
p_{j}-\rho p_{j-1}=-a_{j}%
\]
and further, since $P(\rho)=0$,%
\begin{align*}
\rho p_{k-1}  &  =\rho\left(  \rho^{k}-a_{0}\rho^{k-1}-\cdots-a_{k-1}\right)
\\
&  =P(\rho)+a_{k}\\
&  =a_{k}.
\end{align*}

Thus
\[
(\lambda-\rho)P_{1}(\lambda)=P(\lambda)
\]
and if $\rho_{1}\not =\rho$ then $P_{1}(\rho_{1})=0$ by (\ref{pq}). If
$\rho_{1}=\rho$ then $\rho$ is a double root of both $P$ and $Q$ so that their
derivatives are zeros, i.e.,%
\begin{equation}
P^{\prime}(\rho)=Q^{\prime}(\rho)=0. \label{pqder}%
\end{equation}

Therefore,
\begin{align*}
P_{1}(\rho)  &  =\rho^{k}+\sum_{j=0}^{k-1}(\rho^{j+1}-a_{0}\rho^{j}%
-\cdots-a_{j-1}\rho-a_{j})\rho^{k-j-1}\\
&  =(k+1)\rho^{k}-\sum_{j=0}^{k-1}(k-j)a_{j}\rho^{k-j-1}\\
&  =P^{\prime}(\rho).
\end{align*}

In particular, if $\rho_{1}=\rho$ then $P_{1}(\rho_{1})=0$ by (\ref{pqder}).
Similar calculations show that $Q_{1}(\rho_{1})=0$, thus completing the proof.
\end{proof}

\medskip

Theorem \ref{fsor} and Lemma \ref{replem} imply the following result.

\begin{corollary}
\label{repalor}(a) Assume that the polynomials $P$ and $Q$ have two common,
unit roots $\rho,\rho_{1}$ in an integral domain $R$. Then the factor equation
(\ref{fe}) of (\ref{mna}) has the linear form symmetry in the $R$-module $M$
with the factor equation%
\begin{equation}
r_{n+1}=-\sum_{i=0}^{k-2}p_{1,i}r_{n-i}+g_{n}\left(  \sum_{i=0}^{k-2}%
q_{1,i}r_{n-i}\right)  \label{alfac2}%
\end{equation}
where%
\[
p_{1,i}=\rho_{1}^{i+1}+p_{0}\rho_{1}^{i}+\cdots+p_{i}\text{ and }q_{1,i}%
=q_{0}\rho_{1}^{i}+q_{1}\rho_{1}^{i-1}+\cdots+q_{i}.
\]

(b) The recurrence (\ref{mna}) has a secondary or repeated SC factorization
that consists of the factor equation (\ref{alfac2}) and the two cofactor
equations%
\begin{align*}
t_{n+1}  &  =\rho_{1}t_{n}+r_{n+1}\\
x_{n+1}  &  =\rho x_{n}+t_{n+1}.
\end{align*}

(c) If $P$ and $Q$ have $m$ common, unit roots in $R$ (counting repeated or
multiple roots) where $1\leq m\leq k$ then (\ref{mna}) can be reduced in order
repeatedly $m$ times in $M.$
\end{corollary}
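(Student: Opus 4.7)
The plan is to prove the three parts in sequence, with (a) doing the work, (b) being an assembly step, and (c) being induction on $m$ using (a) and (b) repeatedly. The key observation, already flagged in Remark 2 after Theorem \ref{fsor}, is that the factor equation (\ref{fe}) is of exactly the same type as (\ref{mna}): its leading coefficient polynomial is $P_1(\lambda) = \lambda^{k} + p_0 \lambda^{k-1} + \cdots + p_{k-1}$ and its inner polynomial is $Q_1(\lambda) = q_0 \lambda^{k-1} + \cdots + q_{k-1}$, with $p_i$ and $q_i$ as defined in Theorem \ref{fsor}. Therefore the question of whether a second linear form symmetry exists is reducible to whether $P_1$ and $Q_1$ share a unit root.

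For part (a), I would invoke Lemma \ref{replem}, which is tailor-made for this purpose: it guarantees that the second shared unit root $\rho_1$ of $P$ and $Q$ is automatically a shared unit root of $P_1$ and $Q_1$. Thus Theorem \ref{fsor} applies to the factor equation (\ref{fe}) with root $\rho_1$, producing precisely the form symmetry, the factor equation (\ref{alfac2}), and the cofactor equation $t_{n+1} = \rho_1 t_n + r_{n+1}$. The coefficients $p_{1,i}$ and $q_{1,i}$ stated in (a) are exactly what Theorem \ref{fsor} produces when applied to $P_1,Q_1$ with root $\rho_1$.

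Part (b) is then just bookkeeping: the first application of Theorem \ref{fsor} yields the factorization (\ref{fe})--(\ref{cfe}) of (\ref{mna}), and the second application (from part (a)) yields a factorization of (\ref{fe}) itself. Chaining these triangular systems gives the ordered triple consisting of (\ref{alfac2}) together with the two cofactor equations $t_{n+1} = \rho_1 t_n + r_{n+1}$ and $x_{n+1} = \rho x_n + t_{n+1}$, whose total order is $(k-1) + 1 + 1 = k+1$, matching the order of (\ref{mna}).

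Part (c) follows by induction on $m$, with the base cases $m=1$ (Theorem \ref{fsor}) and $m=2$ (parts (a) and (b)) established. For the inductive step, assuming $P$ and $Q$ share $m$ common unit roots $\rho, \rho_1, \ldots, \rho_{m-1}$, one applies Lemma \ref{replem} to conclude that $\rho_1, \ldots, \rho_{m-1}$ are all common unit roots of $P_1$ and $Q_1$, so the inductive hypothesis applies to the factor equation (\ref{fe}) with $m-1$ reductions. The only point requiring care—and the main obstacle—is handling multiplicities: when $\rho_1 = \rho$ (a double root), one needs the derivative calculation already carried out inside Lemma \ref{replem}, and for higher multiplicities one iterates this reasoning, noting that each reduction step preserves the multiplicity structure of the remaining shared roots (since the factorization $(\lambda - \rho) P_1(\lambda) = P(\lambda)$ and the analogous one for $Q$ reduce the multiplicity of $\rho$ by exactly one at each step). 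The use of integral domain is essential here because polynomial division and the equality $P = (\lambda - \rho) P_1$ require that $R$ have no zero divisors, so roots may be cancelled unambiguously.
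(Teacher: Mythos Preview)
Your proposal is correct and follows essentially the same approach as the paper: the paper likewise notes that (\ref{fe}) has the same form as (\ref{mna}) with $a_i$ replaced by $-p_i$ and $b_i$ by $q_i$, invokes Lemma \ref{replem} to ensure $\rho_1$ is a shared unit root of $P_1,Q_1$, applies Theorem \ref{fsor} again for (a), reads off the cofactors for (b), and concludes (c) by induction. Your treatment of multiplicities in (c) is more explicit than the paper's terse ``by induction,'' but the underlying argument is the same.
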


\begin{proof}
(a) The proof applies the same arguments as in the proof of Theorem \ref{fsor}
to the factor equation (\ref{fe}) that was obtained using the shared unit root
$\rho.$ We simply change $a_{i}$ to $-p_{i}$ and $b_{i}$ to $q_{i}$ in the
proof and make other minor modifications to complete the proof here.

(b) This follows from the general form of cofactors associated with the linear
form symmetry.

(c) This follows by induction, using Part (a).
\end{proof}

\medskip

\begin{definition}
If $m=k+1$ in Corollary \ref{repalor}(c) then we say that (\ref{mna}) has a
complete sc-factorization into a triangular systems of first-order recurrences.
\end{definition}

The next example illustrates this concept; see Corollary \ref{calsp} below for
a more general class of recurrences of the same type.

\begin{example}
\label{exzp}Let $M$ be a vector space over the finite field $\mathbb{Z}_{p}$
where $p$ is an odd prime and consider the third-order recurrence%
\begin{equation}
x_{n+1}=2x_{n-1}+x_{n-2}+g_{n}(x_{n}-x_{n-1}-x_{n-2}) \label{zp1}%
\end{equation}
in $M.$ This recurrence is of type (\ref{mna}) with coefficients $a_{0}=0$,
$a_{1}=2$, $a_{2}=1$ and $b_{0}=1$, $b_{1}=b_{2}=-1.$ These values yield the
polynomials
\[
P(\lambda)=\lambda^{3}-2\lambda-1,\qquad Q(\lambda)=\lambda^{2}-\lambda-1
\]

Note that $P(\lambda)=(\lambda+1)(\lambda^{2}-\lambda-1)=$ $(\lambda
+1)Q(\lambda).$ It is known (see, e.g. \cite{gup}) that $Q$ has two (nonzero)
roots $\rho_{1},\rho_{2}\in$ $\mathbb{Z}_{p}$ if and only if $p\equiv
0,1,4(\operatorname{mod}5).$ For instance, in $\mathbb{Z}_{5}$, 3 is a
double-root of $Q$ so $\rho_{1}=\rho_{2}=3;$ in $\mathbb{Z}_{11}$, 4 and 8 are
distinct roots. For such primes, Corollary \ref{repalor} implies that
(\ref{zp1}) has at least two sc-factorizations; since (\ref{zp1}) has order 3,
it must have a complete sc-factorization. First, using $\rho_{1}$ we obtain%
\begin{align}
t_{n+1}  &  =-\rho_{1}t_{n}-(\rho_{1}-1)t_{n-1}+g_{n}(t_{n}-(\rho
_{1}-1)t_{n-1})\nonumber\\
x_{n+1}  &  =\rho_{1}x_{n}+t_{n+1} \label{czp}%
\end{align}

Next, using $\rho_{2}$ we find the sc-factorization of the factor equation
above
\[
r_{n+1}=-r_{n}+g_{n}(r_{n}),\qquad t_{n+1}=\rho_{2}t_{n}+r_{n+1}%
\]

These equations, together with the cofactor equation (\ref{czp}) constitute
the complete sc-factorization of (\ref{zp1}) into a triangular system of three
first-order recurrences.
\end{example}

In the next example, we find the complete sc-factorization for a third-order
recurrence\ in two different ways.

\begin{example}
\label{nrmo3}Consider the following third-order functional recurrence
\begin{equation}
x_{n+1}(r)=x_{n-1}(r)+\int_{0}^{r}\phi_{n}(\tau,x_{n}-x_{n-2})(\tau
)d\tau,\quad0\leq r\leq1\label{g2c}%
\end{equation}
in the normed vector space $C[0,1]$ of continuous, complex-valued functions on
the interval $[0,1]$ and $\phi_{n}:[0,1]\times C[0,1]\rightarrow C[0,1]$ for
all $n.$ We may define
\[
g_{n}(x)(r)=\int_{0}^{r}\phi_{n}(\tau,x)d\tau
\]
and assume that the initial functions $x_{0}(r),x_{1}(r),x_{2}(r)$ are given
for $0\leq r\leq1$. The polynomials corresponding to (\ref{g2c}) are%
\[
P(\lambda)=\lambda^{3}-\lambda,\quad Q(\lambda)=\lambda^{2}-1
\]

These share \textit{two} unit roots $\pm1$ in $\mathbb{C}$ so we may apply
Corollary \ref{repalor}. First, with $\lambda=1$ we obtain the
sc-factorization%
\begin{align}
t_{n+1}(r)  &  =-t_{n}(r)+\int_{0}^{r}\phi_{n}(\tau,t_{n}+t_{n-1})(\tau
)d\tau,\label{g2cf}\\
x_{n+1}(r)  &  =x_{n}(r)+t_{n+1}(r) \label{g2cc}%
\end{align}

Next, with $\lambda=-1$ the factor equation (\ref{g2cf}) has the
sc-factorization%
\begin{align*}
s_{n+1}(r)  &  =\int_{0}^{r}\phi_{n}(\tau,s_{n})(\tau)d\tau\\
t_{n+1}(r)  &  =-t_{n}(r)+s_{n+1}(r).
\end{align*}

These equations together with (\ref{g2cc}) constitute the \textit{complete
sc-factorization} of (\ref{g2c}) into first-order recurrences. It is worth
mentioning here that a complete sc-factorization for the recurrence
(\ref{g2c}) may alternatively be obtained as follows. First, write (\ref{g2c})
as
\[
x_{n+1}(r)-x_{n-1}(r)=\int_{0}^{r}\phi_{n}(\tau,x_{n}-x_{n-2})(\tau)d\tau
\]

Using this suggestive form, we substitute $s_{n}=x_{n}-x_{n-2}$ to obtain the
first-order recurrence%
\[
s_{n+1}(r)=\int_{0}^{r}\phi_{n}(\tau,s_{n})(\tau)d\tau
\]

Note that the cofactor equation, obtained from the above substitution, now has
order 2:%
\begin{equation}
x_{n+1}(r)=x_{n-1}(r)+s_{n+1}(r). \label{cf2}%
\end{equation}

The last two equations yield an sc-factorization of (\ref{g2c}) that is of a
different type than what we obtained using Theorem \ref{fsor}; see Chapter 6
in \cite{fsor}. The cofactor (\ref{cf2}) is easily reduced using Theorem
\ref{fsor} (now with $Q(\lambda)=0$) as%
\[
t_{n+1}(r)=-t_{n}(r)+s_{n+1}(r),\qquad x_{n+1}(r)=x_{n}(r)+t_{n+1}(r)
\]

As might be expected, this is the same complete sc-factorization that we
obtained via Corollary \ref{repalor}.
\end{example}

Corollary \ref{repalor} and the above two examples raise a natural question:
Are there special cases of (\ref{mna}) that have complete sc-factorizations
into a triangular system of $k+1$ first-order recurrences? The following
result indicates two such cases; Examples \ref{exzp} and \ref{nrmo3} above
illustrate each of these cases.

\begin{corollary}
\label{calsp}Let $R$ be an integral domain and $M$ an $R$-module.

(a) Let $r,b_{j}\in R$, $j=1,\ldots,k$, $b_{k}\not =0$ and define $b_{0}=1$,
$b_{k+1}=0$. If the polynomial equation
\begin{equation}
\lambda^{k}+b_{1}\lambda^{k-1}+b_{2}\lambda^{k-2}+\cdots+b_{k}=0 \label{q}%
\end{equation}
has $k$ unit roots in $R$ then the recurrence%
\begin{equation}
x_{n+1}=\sum_{j=0}^{k}(rb_{j}-b_{j+1})x_{n-j}+g_{n}\left(  \sum_{j=0}^{k}%
b_{j}x_{n-j}\right)  \label{fsc}%
\end{equation}
has a complete sc-factorization into a triangular system of $k+1$ first-order
recurrences in $M.$

(b) Let $b,a_{j}\in R$, $j=0,\ldots,k-1$ and $a_{k-1}\not =0$. If the
polynomial equation
\begin{equation}
\lambda^{k}-a_{0}\lambda^{k-1}-a_{1}\lambda^{k-2}-\cdots a_{k-1}=0
\label{1pol}%
\end{equation}
has $k$ unit roots in $R$ then the recurrence
\begin{equation}
x_{n+1}=\sum_{j=0}^{k-1}a_{j}x_{n-j}+g_{n}\left(  bx_{n}-\sum_{j=1}^{k}%
a_{j-1}bx_{n-j}\right)  \label{alsp}%
\end{equation}
has a complete sc-factorization into a triangular system of $k+1$ first-order
recurrences in $M$.
\end{corollary}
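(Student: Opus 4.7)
The plan is to deduce both parts of the corollary from Corollary \ref{repalor}(c). Each recurrence appearing in the statement is already in the form (\ref{mna}), so what I need to check in each case is that the associated polynomials $P$ and $Q$ of Theorem \ref{fsor} share $k$ unit roots in $R$ (counting multiplicity). Granting this, Corollary \ref{repalor}(c) allows $k$ successive order reductions; since (\ref{mna}) has order $k+1$ and each reduction peels off a first-order cofactor while dropping the order of the surviving factor by one, the process terminates in a triangular system of $k+1$ first-order recurrences, which is a complete sc-factorization.

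For Part (a), I would read off the coefficients of (\ref{fsc}): the linear-part coefficients of (\ref{mna}) are $a_j = rb_j - b_{j+1}$ for $j = 0,\ldots,k$, while the coefficients in the argument of $g_n$ are the $b_j$ themselves. Consequently $Q$ coincides with the polynomial in (\ref{q}), so $Q$ has $k$ unit roots by hypothesis. To compute $P$, I would substitute $a_j = rb_j - b_{j+1}$ into its definition, split the resulting sum, reindex the term containing $b_{j+1}$, and use the boundary conditions $b_0 = 1$ and $b_{k+1} = 0$ to obtain
\[
P(\lambda) \;=\; \lambda^{k+1} - rQ(\lambda) + \bigl[\lambda Q(\lambda) - \lambda^{k+1}\bigr] \;=\; (\lambda - r)\,Q(\lambda).
\]
Every root of $Q$ is therefore a root of $P$, so $P$ and $Q$ share the $k$ prescribed unit roots and Corollary \ref{repalor}(c) applies.

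For Part (b), I would again read off coefficients: the linear part of (\ref{alsp}) contributes $a_0,\ldots,a_{k-1},0$, while the argument of $g_n$ contributes $b_0 = b$ and $b_j = -a_{j-1}b$ for $j = 1,\ldots,k$. Setting $p(\lambda) = \lambda^{k} - a_0\lambda^{k-1} - \cdots - a_{k-1}$, which is exactly the polynomial in (\ref{1pol}), a short direct calculation shows
\[
P(\lambda) \;=\; \lambda\, p(\lambda), \qquad Q(\lambda) \;=\; b\, p(\lambda).
\]
Because $R$ is an integral domain and the case $b = 0$ can be excluded (it trivializes the $g_n$-argument and the claim reduces to the linear theory), the roots of $Q$ in $R$ are precisely the roots of $p$; and because $a_{k-1} \neq 0$, the extra factor $\lambda$ in $P$ contributes only the root $0$, which is not a root of $p$. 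Hence the $k$ unit roots of $p$ are exactly the common roots of $P$ and $Q$, and Corollary \ref{repalor}(c) again produces the complete sc-factorization.

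The main obstacle is purely the index bookkeeping needed to verify the two polynomial identities $P = (\lambda - r)Q$ in Part (a) and $P = \lambda\, p$, $Q = b\, p$ in Part (b); once these are in hand the conclusion is immediate. The only subtle point is confirming in Part (b) that the spurious root $0$ of $P$ is not shared by $Q$, which is precisely where the hypothesis $a_{k-1} \neq 0$ together with the integral-domain assumption on $R$ enters.
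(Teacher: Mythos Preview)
Your proposal is correct and follows essentially the same route as the paper: in Part (a) you establish $P(\lambda)=(\lambda-r)Q(\lambda)$ so that the $k$ unit roots of $Q$ are shared, and in Part (b) you establish $P(\lambda)=\lambda\,p(\lambda)$ and $Q(\lambda)=b\,p(\lambda)$ so that the $k$ unit roots of $p$ are shared, then invoke Corollary~\ref{repalor}(c) in both cases. The only minor difference is your handling of $b=0$: the paper treats this case explicitly by noting that $Q$ vanishes identically so every unit root of $p$ is trivially a common root, whereas you dismiss it by appeal to the linear theory; either justification suffices, and your observation about the spurious root $0$ of $P$ is harmless but unnecessary, since Corollary~\ref{repalor}(c) only requires the existence of $k$ common unit roots, not that these be \emph{all} the common roots.
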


\begin{proof}
(a) We observe that the polynomial in (\ref{q}) is $Q(\lambda)$ in this case
and from (\ref{fsc}), $P(\lambda)$ is%
\[
\lambda^{k+1}-\sum_{j=0}^{k}(rb_{j}-b_{j+1})\lambda^{k-j}=(\lambda
-r)Q(\lambda)
\]

Thus $P$ and $Q$ share $k$ unit roots in $R$ whenever $Q$ does and the proof
is complete.

(b) Let $P(\rho)=Q(\rho)=0$, i.e.%
\begin{align}
\rho^{k+1}-a_{0}\rho^{k}-a_{1}\rho^{k-1}-\cdots-a_{k-1}\rho &  =0
\label{alorp}\\
b\rho^{k}-a_{0}b\rho^{k-1}-\cdots-a_{k-2}b\rho-a_{k-1}b  &  =0. \label{alorq}%
\end{align}

If $b\not =0$ then since also $\rho,a_{k-1}\not =0$, after cancelling $\rho$
from (\ref{alorp}) and $b$ from (\ref{alorq}), both equations reduce to the
polynomial in (\ref{1pol}). Thus, every common, unit root of $P$ and $Q$ is a
zero of the polynomial equation (\ref{1pol}) and conversely, every zero of
(\ref{1pol}) is a root of both $P$ and $Q.$ Therefore, (\ref{alsp}) has $k$
sc-factorizations by Corollary \ref{repalor}.

If $b=0$ then (\ref{alorq}) is true trivially for all $\rho\in R.$ Therefore,
again the single equation (\ref{1pol}) remains and by Corollary \ref{repalor},
(\ref{alsp}) has $k$ repeated sc-factorizations. Note that in this case,
(\ref{alsp}) reduces to a linear non-homogeneous equation of order $k$.
\end{proof}

\medskip

\begin{remark}
\label{A}The recurrence (\ref{alsp}) has a sc-factorization other than the one
in Corollary \ref{calsp}(b). If $b\not =0$ then the substitution%
\[
s_{n+1}=x_{n}-\sum_{j=1}^{k}a_{j-1}x_{n-j}%
\]
yields the sc-factorization
\begin{align}
s_{n+1}  &  =g_{n}(bs_{n})\label{cc1}\\
x_{n+1}  &  =\sum_{j=1}^{k}a_{j-1}x_{n-j+1}+s_{n+1} \label{cc2}%
\end{align}

Here the factor equation has order 1 and the cofactor is a linear
non-homogeneous equation of order $k.$ For more details on this type of
sc-factorization and the corresponding form symmetry we refer to \cite{fsor}
(also see Example \ref{nrmo3}).
\end{remark}

We close this section with the following result on linear recurrences in
fields that shows, in particular, if $R$ is an algebraically closed field then
(\ref{cc2}) has a complete sc-factorization in every vector space over $R$.
Recall that if $g_{n}$ is a sequence of constants then (\ref{mna}) reduces to
a non-homogeneous linear equation. In this case, it is no loss of generality
to set $b_{j}=a_{j}$ for $j=0,1,\ldots,k,$ so that $Q=P.$ Thus the proof of
the following is clear.

\begin{corollary}
\label{linf} Let $\mathcal{F}$ be a nontrivial field, $a_{j},c_{n}%
\in\mathcal{F}$ for $j=0,1,\ldots,k$ and all $n\geq k$ and assume that
$a_{k}\not =0$. The linear non-homogeneous recurrence%
\[
x_{n+1}=\sum_{i=0}^{k}a_{i}x_{n-i}+c_{n}%
\]
has a complete sc-factorization in $\mathcal{F}$ into a triangular system of
$k+1$ first-order, linear non-homogeneous recurrences if the polynomial%
\[
P(\lambda)=\lambda^{k+1}-\sum_{i=0}^{k}a_{i}\lambda^{k-i}%
\]
factors completely in $\mathcal{F}$. In particular, every linear
non-homogeneous recurrence in an algebraically closed field has a complete sc-factorization.
\end{corollary}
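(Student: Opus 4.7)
The plan is to put the given linear non-homogeneous recurrence into the form (\ref{mna}) and then invoke Corollary \ref{repalor}(c). The starting observation is that, because $g_{n}$ collapses to the constant $c_{n}$, the inner combination $\sum_{i=0}^{k}b_{i}x_{n-i}$ that appears inside $g_{n}$ in (\ref{mna}) is simply discarded. Consequently, after choosing the outer coefficients to agree with the $a_{i}$ of the given recurrence, the inner coefficients $b_{i}$ are free parameters that may be prescribed however is convenient.

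By hypothesis, $P(\lambda)=\prod_{j=0}^{k}(\lambda-\rho_{j})$ splits in $\mathcal{F}$, and since $P(0)=-a_{k}\neq 0$ every $\rho_{j}$ is nonzero and hence a unit in the field $\mathcal{F}$. I would set $b_{0}=1$ and choose $b_{1},\ldots,b_{k}$ so that $Q(\lambda)=\prod_{j=1}^{k}(\lambda-\rho_{j})=P(\lambda)/(\lambda-\rho_{0})$; then $P$ and $Q$ share the $k$ unit roots $\rho_{1},\ldots,\rho_{k}$ (counted with multiplicity). An application of Theorem \ref{fsor} with $\rho=\rho_{0}$ then yields the first-order cofactor $x_{n+1}=\rho_{0}x_{n}+t_{n+1}$ together with a factor equation of the form $t_{n+1}=-\sum p_{i}t_{n-i}+g_{n}(\sum q_{i}t_{n-i})$; since $g_{n}$ is still the constant sequence $c_{n}$, this factor equation is again linear non-homogeneous. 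Lemma \ref{replem} guarantees that the remaining shared roots $\rho_{1},\ldots,\rho_{k}$ descend to the reduced polynomial pair $P_{1},Q_{1}$, so the construction iterates. After $k$ applications the factor equation has collapsed to order one, and together with the $k$ first-order cofactors the outcome is a triangular system of $k+1$ first-order linear non-homogeneous recurrences in $\mathcal{F}$.

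The algebraic-closure claim is then immediate, since over an algebraically closed field every polynomial factors completely into linear factors. The one step that requires a moment of care, rather than a genuine obstacle, is the verification that the linear non-homogeneous structure really is preserved at every stage of the iteration: one must confirm that the factor equation produced by Theorem \ref{fsor} continues to carry the same constant $g_{n}\equiv c_{n}$, which is visible directly from formula (\ref{fe}) once one observes that Theorem \ref{fsor} leaves the outer function $g_{n}$ untouched and only rewrites the two surrounding linear combinations of the state variables.
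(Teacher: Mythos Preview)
Your overall strategy---exploit the freedom in the $b_{i}$ that comes from $g_{n}$ being the constant $c_{n}$, then invoke Corollary~\ref{repalor}---matches the paper's. There is, however, a slip in the execution. With your choice $Q(\lambda)=P(\lambda)/(\lambda-\rho_{0})=\prod_{j=1}^{k}(\lambda-\rho_{j})$, the element $\rho_{0}$ is in general \emph{not} a root of $Q$, so the hypothesis of Theorem~\ref{fsor} (which demands a \emph{common} root of $P$ and $Q$) is not met when you ``apply Theorem~\ref{fsor} with $\rho=\rho_{0}$.'' The repair is immediate: carry out the $k$ successive reductions using only the $k$ shared roots $\rho_{1},\ldots,\rho_{k}$, exactly as Corollary~\ref{repalor}(c) with $m=k$ prescribes. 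This drops the order from $k+1$ to $1$ in $k$ steps and delivers the required triangular system of $k+1$ first-order linear non-homogeneous recurrences; $\rho_{0}$ never needs to be invoked separately.

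The paper expresses the same freedom more tersely: since $g_{n}\equiv c_{n}$, condition~(\ref{esb}) is vacuous and only~(\ref{esa})---equivalently, the polynomial $P$---plays a role. The paper compresses this into the phrase ``set $b_{j}=a_{j}$ \ldots\ so that $Q=P$,'' a shorthand for the fact that the $Q$-constraint disappears and one may proceed as though $P$ and $Q$ shared every root. Your device of actually manufacturing a degree-$k$ polynomial $Q$ that shares $k$ roots with $P$ is a perfectly clean alternative once the indexing slip above is corrected.
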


\section{On the reducibility of recurrences\label{red}}

The preceding two sections show a close relationship between polynomial
factorization and the sc-factorization of (\ref{mna}). Specifically, the
existence of common unit roots for the polynomials $P$ and $Q$ in a
commutative ring $R$ with identity is sufficient for the sc-factorization of
(\ref{mna}) in any $R$-module $M$. In general, we define reducibility for
recurrences in modules as follows.

\begin{definition}
(Reducibility) The recurrence (\ref{mn}) in a left unitary $R$-module is
reducible, relative to the linear form symmetry, if the difference equations
(\ref{esa}) and (\ref{esb}) have a common solution in the unit of groups of
the ring $R$. If such a common unitary solution does not exist then (\ref{mn})
is irreducible (relative to the linear form symmetry).
\end{definition}

Note that the concept of reducibility for recurrences can only be relative to
a particular form symmetry because a recurrence may have sc-factorizations
relative to some form symmetries but not others; see \cite{fsor}.

While the existence of common roots for $P$ and $Q$ is sufficient for the
reducibility of (\ref{mna}) in every $R$-module over a commutative ring $R$
with identity, this condition is not necessary because sc-factorizations may
exist even when $P$ or $Q$ have no roots in $R$ at all. We illustrate this
possibility with an example.

\begin{example}
\label{np}Consider the recurrence%
\begin{equation}
x_{n+1}=-x_{n-1}+g_{n}(x_{n}+x_{n+2}) \label{rnp}%
\end{equation}
in an arbitrary vector space $V$ over the field $\mathbb{R}$ of real numbers
with $g_{n}:V\rightarrow V$ arbitrary functions. In this case, $P(\lambda
)=\lambda^{3}+\lambda$ has a single unit root $-1$ in $\mathbb{R}$ but
$Q(\lambda)=\lambda^{2}+1$ has no roots in $\mathbb{R}$; in particular, $P$
and $Q$ share no unit roots in $\mathbb{R}$. However, (\ref{rnp}) does have a
complete sc-factorization in vector spaces over $\mathbb{R}$ which can be
determined using Corollary \ref{n}. We seek a nonzero sequence $\alpha_{n}$ in
$\mathbb{R}$ that satisfies the two equations%
\begin{align*}
\alpha_{n}  &  =-\alpha_{n-1}^{-1}=-\frac{1}{\alpha_{n-1}},\\
0  &  =1+\left(  \alpha_{n-1}\alpha_{n-2}\right)  ^{-1}\quad\text{or\quad
}\alpha_{n-1}=-\frac{1}{\alpha_{n-2}}%
\end{align*}

These are identical difference equations, so any nonzero solution, e.g.
$\alpha_{n}=(-1)^{n}$ works for $n\geq2$. The resulting sc-factorization is%
\begin{align}
t_{n+1}  &  =(-1)^{n}t_{n}+g_{n}(t_{n}+(-1)^{n-1}t_{n-1})\label{fn}\\
x_{n+1}  &  =(-1)^{n}x_{n}+t_{n+1} \label{cn}%
\end{align}

We may now apply either Corollary \ref{n} or Corollary \ref{1} to (\ref{fn}).
Using the latter, it is easy to see that the coefficients of (\ref{fn})
satisfy (\ref{id}) and we obtain the sc-factorization%
\begin{align*}
s_{n+1}  &  =g_{n}\left(  s_{n}\right) \\
t_{n+1}  &  =(-1)^{n-1}t_{n}+s_{n+1}%
\end{align*}

This pair of first-order recurrences, together with (\ref{cn}) yield a
complete sc-factorization of (\ref{rnp}) in any vector space over $\mathbb{R}$.
\end{example}

We note that (\ref{rnp}) is a special case of (\ref{alsp}) so it also has the
alternative sc-factorization mentioned in Remark \ref{A}. This factorization
does not use of polynomials, yet it contains only constant coefficients in its
factor equation.

The difference equations (\ref{ce1}) and (\ref{ce2}) reduce to the polynomial
equations $P(\lambda)=0$ and $Q(\lambda)=0$ in Theorem \ref{fsor} when they
have a common \textit{constant} solution, or fixed point, in $R$. The question
as to whether (\ref{ce1}) and (\ref{ce2}) can have a common solution (a
sequence of units) if $P$ and $Q$ do \textit{not }have common unit roots is
presently difficult to answer in general.

A deeper understanding of (\ref{ce1}) and (\ref{ce2}), which are polynomial
difference equations, is needed that goes beyond polynomial factorization. A
starting point for this more complicated problem is linear non-homogeneous
recurrences for which (\ref{ce2}) is true trivially so we may focus only on
exploring the existence of unitary solutions for (\ref{ce1}). A preliminary
work in this direction is \cite{sedR}.

In special cases where common constant solutions of (\ref{ce1}) and
(\ref{ce2}) must exist, the polynomials $P$ and $Q$ play a more decisive role
in the study of reducibility of recurrences. We encountered one such case in
Corollary \ref{linf} for linear recurrences. To discuss a nonlinear case,
consider the recurrence
\begin{equation}
x_{n+1}=\sum_{i=0}^{k}a_{i}x_{n-i}+g_{n}(x_{n-j}-bx_{n-j-1}) \label{o2b}%
\end{equation}
where $0\leq j\leq k-1$ is fixed and $a_{i}$ is in a commutative ring with
identity for each $i=0,1,\ldots,k$. For this recurrence, (\ref{ce2}) reduces
to
\[
\alpha_{n-j-1}\alpha_{n-j-2}\cdots\alpha_{n-k}-b\alpha_{n-j-2}\cdots
\alpha_{n-k}=0
\]
if $j<k-1$ and to $\alpha_{n-k}-b=0$ if $j=k-1$, for all $n\geq k.$ Thus, only
a constant sequence $\alpha_{n}=b$ is possible and since $\alpha_{n}$ is a
unit by definition, it follows that $b$ must be a unit. With this choice,
(\ref{ce1}) reduces to the equation $P(b)=0.$ Therefore, (\ref{o2b}) is
reducible and we have the following immediate consequence of Theorem
\ref{scmn}.

\begin{corollary}
Let $R$ be a commutative ring with identity, $a_{i},b\in R$ and let
$g_{n}:M\rightarrow M$ be a sequence of functions on an $R$-module $M$. The
recurrence (\ref{o2b}) is reducible in $M$ relative to the linear form
symmetry if and only if $b$ is a unit and a root of $P$ in $R$.
\end{corollary}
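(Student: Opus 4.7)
The plan is to exploit the definition of reducibility together with the sparse form of the $b$-coefficients in (\ref{o2b}). Reducibility relative to the linear form symmetry means exactly that (\ref{esa}) and (\ref{esb}) admit a common solution $\{\alpha_{n}\}$ in the group of units of $R$, so the task is to show that such a sequence exists if and only if $b$ is a unit root of $P$.

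For the forward direction I would read off the coefficients for (\ref{o2b}), namely $b_{j,n}=1$, $b_{j+1,n}=-b$, and $b_{i,n}=0$ for all remaining $i$, and substitute these into (\ref{esb}). The result is a two-term identity
$$
(\alpha_{n-1}\cdots\alpha_{n-j})^{-1} - b\,(\alpha_{n-1}\cdots\alpha_{n-j-1})^{-1} = 0,
$$
with the convention that an empty product equals $1$. Because $R$ is commutative and each $\alpha_{n}$ is a unit, I can multiply through by $\alpha_{n-1}\cdots\alpha_{n-j}$ to collapse this to $\alpha_{n-j-1}=b$. As this holds for every admissible $n$, the sequence must be the constant $b$, which in particular is a unit. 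Substituting $\alpha_{n}\equiv b$ into (\ref{esa}) and multiplying through by $b^{k}$ produces $\sum_{i=0}^{k} a_{i} b^{k-i} = b^{k+1}$, i.e., $P(b)=0$.

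For the converse, assume $b$ is a unit with $P(b)=0$. Then $\alpha_{n}\equiv b$ is visibly a sequence of units; reversing the algebra in the forward step shows that it satisfies (\ref{esb}), and $P(b)=0$ is precisely the condition, as derived in the proof of Theorem \ref{fsor}, for it to satisfy (\ref{esa}). Hence (\ref{esa}) and (\ref{esb}) share a unitary common solution, and by Theorem \ref{scmn} the recurrence (\ref{o2b}) admits an sc-factorization relative to the linear form symmetry; that is, it is reducible.

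No step presents a serious obstacle, since (\ref{o2b}) was tailored so that (\ref{esb}) is tight enough to force $\{\alpha_{n}\}$ to be the constant $b$. The only subtlety is the boundary case $j=k-1$, where one of the products in the simplified (\ref{esb}) is empty; the identity then reads $\alpha_{n-k}=b$ rather than $\alpha_{n-j-1}=b$, exactly matching the observation made in the paragraph preceding the corollary statement, and the same conclusion follows.
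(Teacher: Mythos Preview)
Your argument is correct and mirrors the paper's own reasoning, which appears in the paragraph immediately preceding the corollary: the paper works with the cleared-denominator form (\ref{ce2}) instead of (\ref{esb}), but the two are equivalent over the group of units and both force $\alpha_{n-j-1}=b$ for every $n$, after which (\ref{ce1})/(\ref{esa}) collapses to $P(b)=0$. One small slip in your exposition: the empty-product boundary case is $j=0$, not $j=k-1$, and when $j=k-1$ one has $\alpha_{n-k}=\alpha_{n-j-1}$, so there is no ``rather than''; this does not affect the validity of the proof.
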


From the preceding result, it readily follows that the recurrence%
\[
x_{n+1}=\sum_{i=0}^{k}a_{i}x_{n-i}+g_{n}(x_{n-j}-x_{n-j-1})
\]
is reducible relative to the linear form symmetry if and only if
\[
\sum_{i=0}^{k}a_{i}=1
\]
and similarly,
\[
x_{n+1}=\sum_{i=0}^{k}a_{i}x_{n-i}+g_{n}(x_{n-j}+x_{n-j-1})
\]
is reducible relative to the linear form symmetry if and only if%
\[
\sum_{i=0}^{k}(-1)^{k-i}a_{i}=(-1)^{k+1}.
\]

These two statements are also valid for arbitrary $\mathbb{Z}$-modules. The
results in this section identify some of the boundary of relevance for the
linear form symmetry as far as the sc-factorization of (\ref{mn}) is concerned.

\section{Concluding remarks}

While the important role that algebra in general, and polynomials in
particular play in the factorization of \textit{linear} recurrences is
well-known, the usual methods do not apply to nonlinear recurrences. As the
latter are increasingly used in scientific models, it is necessary to develop
methods that may be relevant to them. To this end, we presented an algebraic
method, namely semiconjugate factorization relative to the linear form
symmetry, that can be used to also obtain sc-factorizations of certain
nonlinear recurrences in modules. For linear recurrences, this method yields
the same results as the standard linear theory.

The preceding study answers a few questions but it leads to many more
unanswered questions, some of which are noted in the above discussion. For
instance, we noted that the ideas in the examples of Section \ref{sys} extend
to systems of recurrences of arbitrary order in vector spaces of any dimension
including infinite, as in Example \ref{nrmo3}. This can be achieved as long as
it is possible to transform the system to an equation of type (\ref{mna}) in
the vector space. However, this transformation is not possible in general a
classification of systems that can be represented by recurrences in modules is
an open problem.

Finally, a question that goes beyond the boundaries of this paper for possible
future study is what types of nonlinear recurrences other than (\ref{mn}) can
be fruitfully studied using the linear or other types of form symmetry and the
corresponding sc-factorizations. For preliminary ideas that may be useful in
such future studies we refer to \cite{fsor}.

\end{document}